
\documentclass[12pt]{amsart}

\usepackage{amsmath}
\usepackage{amssymb}
\usepackage{amsfonts}
\usepackage{amsthm}
\usepackage{enumerate}
\usepackage{hyperref}
\usepackage{color}

\textheight=600pt
\textwidth=435pt
\oddsidemargin=17pt
\evensidemargin=17pt

\theoremstyle{plain}
\newtheorem{thm}{Theorem}[section]

\newtheorem{prop}[thm]{Proposition}
\newtheorem{lem}[thm]{Lemma}
\newtheorem{cor}[thm]{Corollary}

\newtheorem{ques}[thm]{Question}

\theoremstyle{definition}

\newtheorem{dfns-rems}[thm]{Definitions and Remarks}
\newtheorem{notas-rems}[thm]{Notations and Remarks}
\newtheorem{exmps-rems}[thm]{Examples and Remarks}


\begin{document}


\title[depth and sdepth of integral closure of powers]{On the depth and Stanley depth of integral closure of powers of monomial ideals}


\author[S. A. Seyed Fakhari]{S. A. Seyed Fakhari}

\address{S. A. Seyed Fakhari, School of Mathematics, Statistics and Computer Science,
College of Science, University of Tehran, Tehran, Iran.}

\email{aminfakhari@ut.ac.ir}


\begin{abstract}
Let $\mathbb{K}$ be a field and $S=\mathbb{K}[x_1,\dots,x_n]$ be the
polynomial ring in $n$ variables over $\mathbb{K}$. Assume that $G$ is a
graph with edge ideal $I(G)$. We prove that the modules $S/\overline{I(G)^k}$ and $\overline{I(G)^k}/\overline{I(G)^{k+1}}$ satisfy Stanley's inequality for every integer $k\gg 0$. If $G$ is a non-bipartite graph, we show that the ideals $\overline{I(G)^k}$ satisfy Stanley's inequality for all $k\gg 0$. For every connected bipartite graph $G$ (with at least one edge), we prove that ${\rm sdepth}(I(G)^k)\geq 2$, for any positive integer $k\leq {\rm girth}(G)/2+1$. This result partially answers a question asked in \cite{s3}. For any proper monomial ideal $I$ of $S$, it is shown that the sequence $\{{\rm depth}(\overline{I^k}/\overline{I^{k+1}})\}_{k=0}^{\infty}$ is convergent and $\lim_{k\rightarrow\infty}{\rm depth}(\overline{I^k}/\overline{I^{k+1}})=n-\ell(I)$, where $\ell(I)$ denotes the analytic spread of $I$. Furthermore, it is proved that for any monomial ideal $I$, there exists an integer $s$ such that $${\rm depth} (S/I^{sm}) \leq {\rm depth} (S/\overline{I}),$$for every integer $m\geq 1$. We also determine a value $s$ for which the above inequality holds. If $I$ is an integrally closed ideal, we show that ${\rm depth}(S/I^m)\leq {\rm depth}(S/I)$, for every integer $m\geq 1$. As a consequence, we obtain that for any integrally closed monomial ideal $I$ and any integer $m\geq 1$, we have ${\rm Ass}(S/I)\subseteq {\rm Ass}(S/I^m)$.
\end{abstract}


\subjclass[2000]{13C15, 05E40, 13B22}


\keywords{Depth, Edge ideal, Integral closure, Stanley depth, Stanley's inequality}


\thanks{}


\maketitle


\section{Introduction} \label{sec1}

Let $\mathbb{K}$ be a field and let $S=\mathbb{K}[x_1,\dots,x_n]$
be the polynomial ring in $n$ variables over $\mathbb{K}$. Let
$M$ be a finitely generated $\mathbb{Z}^n$-graded $S$-module. Let
$u\in M$ be a homogeneous element and $Z\subseteq
\{x_1,\dots,x_n\}$. The $\mathbb {K}$-subspace $u\mathbb{K}[Z]$
generated by all elements $uv$, with $v$ a monomial in $\mathbb{K}[Z]$, is
called a {\it Stanley space} of dimension $|Z|$, if it is a free
$\mathbb{K}[Z]$-module. Here, as usual, $|Z|$ denotes the number
of elements of $Z$. A decomposition $\mathcal{D}$ of $M$ as a
finite direct sum of Stanley spaces is called a {\it Stanley
decomposition} of $M$. The minimum dimension of a Stanley space
in $\mathcal{D}$ is called the {\it Stanley depth} of
$\mathcal{D}$ and is denoted by ${\rm sdepth} (\mathcal {D})$.
The quantity $${\rm sdepth}(M):=\max\big\{{\rm sdepth}
(\mathcal{D})\mid \mathcal{D}\ {\rm is\ a\ Stanley\
decomposition\ of}\ M\big\}$$ is called the {\it Stanley depth}
of $M$. As a convention, we set ${\rm sdepth}(M)=\infty$, when $M$ is the zero module. We say that a $\mathbb{Z}^n$-graded $S$-module $M$ satisfies {\it Stanley's inequality} if $${\rm depth}(M) \leq
{\rm sdepth}(M).$$ In fact, Stanley \cite{s} conjectured that every $\mathbb{Z}^n$-graded $S$-module satisfies Stanley's inequality.
For a reader friendly introduction to Stanley depth, we refer to
\cite{psty} and for a nice survey on this topic, we refer to
\cite{h}.

The Stanley's conjecture has been recently disproved in \cite{abcj}. The counterexample presented in \cite{abcj} lives in the category of squarefree monomial ideals. Thus, one can still ask whether Stanley's inequality holds for non-squarefree monomial ideals. Based on this observation, in \cite[Question 1.1]{s5}, we asked wether the high powers of any monomial ideal satisfy Stanley's inequality. More explicit, we proposed the following question.

\begin{ques} \label{q1}
{\rm (}{\rm \cite[Question 1.1]{s5})} Let $I$ be a monomial ideal. Is it true that $I^k$ and $S/I^k$ satisfy Stanley's inequality for every integer $k\gg 0$?
\end{ques}

This question was investigated for edge ideals in \cite{asy}, \cite{psy} and \cite{s3} (see Section \ref{sec2} for the definition of edge ideals). The most general results are obtained in \cite{s3}. In that paper, we proved that if $G$ is a graph with $n$ vertices and $I(G)$ is its edge ideal, then $S/I(G)^k$ satisfies Stanley's inequality for every integer $k\geq n-1$ \cite[Corollary 2.5]{s3}. If moreover $G$ is a non-bipartite graph, or
at least one of the connected components of $G$ is a tree with at least one edge, then $I(G)^k$ satisfies Stanley's inequality for every integer $k\geq n-1$ \cite[Corollary 3.6]{s3}. Also, in \cite{s6}, we showed that Question \ref{q1} has the positive answer when $I$ is the cover ideal of a bipartite graph.

In this paper, we ask whether the answer of Question \ref{q1} is positive if one replaces $I^k$ by its integral closure. In other words, we pose the following question.

\begin{ques} \label{q2}
Let $I$ be a monomial ideal. Is it true that $\overline{I^k}$ and $S/\overline{I^k}$ satisfy Stanley's inequality for every integer $k\gg 0$?
\end{ques}

Hoa and Trung \cite[Lemma 1.5]{ht}, prove that for every monomial ideal $I$, we have $\lim_{k\rightarrow\infty}depth (S/\overline{I^k})=n-\ell(I)$, where $\ell(I)$ denotes the analytic spread of $I$. Thus, Question \ref{q2} is equivalent to the following question.

\begin{ques} \label{q3}
Let $I$ be a monomial ideal. Is it true that ${\rm sdepth}(\overline{I^k})\geq n-\ell(I)+1$ and ${\rm sdepth}(S/\overline{I^k})\geq n-\ell(I)$, for every integer $k\gg 0$?
\end{ques}

In Section \ref{sec3}, we study this question for edge ideals of graphs. Note that for any graph $G$, we have $\ell(I(G))=n-p$, where $n$ is the number of vertices and $p$ is the number of bipartite connected components of $G$ (see e.g. \cite[Page 50]{v}). Before stating our results, we mention that Stanley depth of integral closure of powers of monomial ideals was studied in \cite{s1}. One of the main results of that paper asserts that if $I_2\subseteq I_1$ are two monomial ideals, then there exists an integer $s\geq 1$, such that for every $m\geq 1$,$${\rm sdepth} (I_1^{sm}/I_2^{sm}) \leq {\rm sdepth} (\overline{I_1}/\overline{I_2})$$(see Lemma \ref{collect}). This inequality has a crucial role in this paper. As a consequence of this inequality, we will show in Theorem \ref{quo} that for any edge ideal $I=I(G)$, the module $S/\overline{I^k}$ satisfies Stanley's inequality, for $k\gg 0$. We also, prove that if $G$ is a non-bipartite graph, then $\overline{I(G)^k}$ satisfies Stanley's inequality for every integer $k\gg 0$ (see Theorem \ref{ide}).

Assume that $G$ is a bipartite graph. By \cite[Theorem 1.4.6 and Corollay 10.3.17]{hh'}, we know that $I(G)$ is a normal ideal. Thus, $\overline{I(G)^k}$ satisfies Stanley's inequality if and only if $I(G)^k$ satisfies the Stanley's inequality. We do not know whether for a bipartite graph $G$, the ideal $I(G)^k$ satisfies the Stanley's inequality, for any integer $k\gg 0$. However, in \cite{s3}, we noticed that it is sufficient to consider connected bipartite graphs. Indeed, we proved that $I(G)^k$ satisfies the Stanley's inequality, for every bipartite graph $G$ and for any integer $k \gg 0$, provided that the answer of the following question is positive.

\begin{ques} \label{q4}
{\rm (}{\rm \cite[Question 3.3]{s3})} Let $G$ be a connected bipartite graph (with at least one edge) and suppose $k\geq 1$ is an integer. Is it true that ${\rm sdepth}(I(G)^k)\geq 2$?
\end{ques}

In \cite[Proposition 3.4]{s3}, we showed that the answer of Question \ref{q4} is positive when $G$ is a tree. In Theorem \ref{girth}, we extend this result, by proving that for any connected bipartite graph $G$ and every integer $k\leq {\rm girth}(G)/2+1$, we have ${\rm sdepth}(I(G)^k)\geq 2$. Assume that $G$ is a (not necessarily connected) bipartite graph with at leat one edge and let $g$ be the maximum girth of the connected components of $G$. As a consequence of Theorem \ref{girth}, we conclude that for every integer $k\leq g/2+1$,$${\rm sdepth}(I(G)^k)\geq n-\ell(I(G))=n-p,$$ where $n$ is the number of vertices and $p$ is the number of connected components of $G$ (see Corollary \ref{discon}).

After studying the Stanley depth of $\overline{I^k}$ and $S/\overline{I^k}$, we consider the modules of the form $\overline{I^k}/\overline{I^{k+1}}$. In order to determine whether $\overline{I^k}/\overline{I^{k+1}}$ satisfies Stanley's inequality for $k\gg 0$, we need to know the asymptotic behavior of depth of these modules. We know from \cite[Theorem 1.2]{hh''} that for every proper monomial ideal $I$ of $S$, the sequence $\{{\rm depth}(I^k/I^{k+1})\}$ is convergent and $$\lim_{k\rightarrow\infty}{\rm depth}(S/I^k)=\lim_{k\rightarrow\infty}{\rm depth}(I^k/I^{k+1}).$$In Theorem \ref{dtwoquo}, we show that the same holds if one replaces the powers of $I$ by their integral closure. In other words, the sequence $\{{\rm depth}(\overline{I^k}/\overline{I^{k+1}})\}$ is convergent and moreover, $$\lim_{k\rightarrow\infty}{\rm depth}(S/\overline{I^k})=\lim_{k\rightarrow\infty}{\rm depth}(\overline{I^k}/\overline{I^{k+1}}).$$ As mentioned above, by the result of Hoa and Trung \cite[Lemma 1.5]{ht}, we know that $\lim_{k\rightarrow\infty}{\rm depth}(S/\overline{I^k})=n-\ell(I)$. Thus, in order to prove that $\overline{I^k}/\overline{I^{k+1}}$ satisfies Stanley's inequality for every integer $k\gg 0$, we must show ${\rm sdepth}(\overline{I^k}/\overline{I^{k+1}})\geq n-\ell(I)$. We prove this for edge ideals in Theorem \ref{twoquo}. We mention that the proof of Theorem \ref{twoquo} is also based on Lemma \ref{collect}.

As we mentioned above, Lemma \ref{collect} has a crucial role in Section \ref{sec3}. As a particular case of this lemma, for every monomial ideal $I\subseteq S$, there exists an integer $s\geq 1$ with the property that$${\rm sdepth}(S/I^{sm})\leq {\rm sdepth}(S/\overline{I}).$$It is reasonable to ask whether this inequality is true, if one replaces sdepth by depth. In Theorem \ref{dnormal1}, we give a positive answer to this question and even more, we show that one can choose $s$ to be $\mu(\overline{I^{\ell(I)-1}})!$, where for every monomial ideal $J$, we denote by $\mu(J)$ the number of minimal monomial generators of $J$. The proof of Theorem \ref{dnormal1} is based on a formula due to Takayama \cite[Theorem 2.2]{t1} which is a generalization of the so-called Hochster's formula and relates the local cohomology modules of a (non-squarefree) monomial ideal to reduced homologies of particular simplicial complexes.

Finally , assume that $I$ is a squarefree monomial ideal. We know from the proof of \cite[Theorem 2.6]{htt} that$${\rm depth}(S/I^m)\leq {\rm depth}(S/I),$$for every integer $m\geq 1$. In Theorem \ref{dnormal2}, we extend this inequality to the class integrally closed monomial ideals. As a consequence, we obtain that for any integrally closed monomial ideal and every integer $m\geq 1$, we have$${\rm Ass}(S/I)\subseteq {\rm Ass}(S/I^m)$$(see Corollary \ref{ass}).


\section{Preliminaries} \label{sec2}

In this section, we provide the definitions and basic facts which will be used in the next sections.

\subsection{Notions from commutative algebra}

Let $\mathbb{K}$ be a field and $S=\mathbb{K}[x_1,x_2,\dots,x_n]$ be the
polynomial ring in $n$ variables over $\mathbb{K}$. Assume that
 $I\subset S$ is an arbitrary ideal. An element $f \in S$ is
{\it integral} over $I$, if there exists an equation
$$f^k + c_1f^{k-1}+ \ldots + c_{k-1}f + c_k = 0 {\rm \ \ \ \ with} \ c_i\in I^i.$$
The set of elements $\overline{I}$ in $S$ which are integral over $I$ is the {\it integral closure}
of $I$. It is known that the integral closure of a monomial ideal $I\subset S$ is a monomial ideal
generated by all monomials $u \in S$ for which there exists an integer $k$ such that
$u^k\in I^k$ (see \cite[Theorem 1.4.2]{hh'}). The ideal $I$ is {\it integrally closed}, if $I = \overline{I}$, and $I$ is {\it normal} if all powers
of $I$ are integrally closed. By \cite[Theorem 3.3.18]{v'}, a monomial ideal $I$ is normal if and only if the Rees algebra $\mathcal{R}(I)=S[IT]=\bigoplus_
{n=0}^{\infty}I^n$ is a normal ring.

Let $I\subset S$ be a monomial ideal. A classical result by Burch \cite{b'} states
that $$\min_k{\rm depth}(S/I^k)\leq n-\ell(I),$$ where $\ell(I)$ is the
analytic spread of $I$, that is, the dimension of $\mathcal{R}(I)/
{{\frak{m}}\mathcal{R}(I)}$, where $\frak{m}=(x_1,\ldots,x_n)$ is the maximal ideal of $S$. By a theorem of
Brodmann \cite{b}, ${\rm depth}(S/I^k)$ is constant for large $k$. We call
this constant value the {\it limit depth} of $I$, and denote it by
$\lim_{k\rightarrow \infty}{\rm depth}(S/I^k)$. Brodmann improved the Burch's
inequality by showing that$$\lim_{k\rightarrow \infty}{\rm depth}(S/I^k)
\leq n-\ell(I).$$It is well-known \cite[Proposition 10.3.2]{hh'} that the equality occurs in the above inequality, if $I$ is a normal ideal. As mention in introduction, recently, Hoa and Trung \cite[Lemma 1.5]{ht} proved that for every monomial ideal $I$, $$\lim_{k\rightarrow \infty}{\rm depth}(S/\overline{I^k)}
= n-\ell(I).$$

Let $I\subseteq S$ be a monomial ideal. The set of minimal monomial generators of $I$ is denoted by $G(I)$ and we set $\mu(I):=|G(I)|$. We also denote the set of associated primes of $S/I$, by ${\rm Ass}(S/I)$. The {\it associated graded ring of $S$ with respect to $I$} will be denoted by ${\rm gr}_I(S)$ and it is defined as ${\rm gr}_I(S)=\bigoplus_{k=0}^{\infty}I^k/I^{k+1}$.

\subsection{Notions from combinatorics}

Let $G$ be a simple graph with vertex set $V(G)=\big\{v_1, \ldots,
v_n\big\}$ and edge set $E(G)$. For every vertex $v_i\subset V(G)$, we denote by $G\setminus v_i$, the graph with vertex set $V(G\setminus v_i)=V(G)\setminus \{v_i\}$ edge set $E(G\setminus v_i)=\{e\in E(G)\mid v_i\notin e\}$. A {\it tree} is a connected graph which has no cycle. The {\it girth} of $G$, denoted by ${\rm girth}(G)$ is the length of the shortest cycle in $G$. We set ${\rm girth}(G)=\infty$, if $G$ has no cycle. The graph $G$ is {\it bipartite} if there exists a partition $V(G)=U_1\cup U_2$ with $U_1\cap U_2=\varnothing$ such that each edge of $G$ is of the form $\{v_i,v_j\}$ with $v_i\in U_1$ and $v_j\in U_2$. A subset $A$ of $V(G)$ is called an {\it independent subset} of $G$ if there are no edges among the vertices of $A$.

A {\it simplicial complex} $\Delta$ on the set of vertices $[n]:=\{1,
\ldots,n\}$ is a collection of subsets of $[n]$ which is closed under
taking subsets; that is, if $F \in \Delta$ and $F'\subseteq F$, then also
$F'\in\Delta$. By $\widetilde{H}_i(\Delta; \mathbb{K})$, we mean the $i$th reduced homology of $\Delta$ with coefficients $\mathbb{K}$.

The {\it independence simplicial complex} of a graph $G$ is defined by
$$\Delta_G=\{A\subseteq V(G)\mid A \,\, \mbox{is an independent set in}\,\,
G\},$$and it is an important object in combinatorial commutative algebra.

\subsection{Notions from combinatorial commutative algebra}

One of the connections between the combinatorics and commutative algebra is
via rings constructed from the combinatorial objects. Let $\Delta$ be a simplicial complex on $[n]$. For every
subset $F\subseteq [n]$, we set $\mathbf{x}_F=\prod_{i\in F}x_i$. The {\it
Stanley--Reisner ideal of $\Delta$ over $\mathbb{K}$} is the ideal $I_{
\Delta}$ of $S$ which is generated by those squarefree monomials $\mathbf{x}_F$ with
$F\notin\Delta$. In other words, $I_{\Delta}=(\mathbf{x}_F\mid F\notin
\Delta)$.

There is a natural correspondence between quadratic squarefree monomial ideals of $S$ and finite simple graphs with $n$ vertices. To every simple graph $G$ with vertex set $V(G)=\{v_1, \ldots, v_n\}$ and edge set $E(G)$, one associates its {\it edge ideal} $I(G)$ defined by
$$I(G)=\big(x_ix_j: \{v_i, v_j\}\in E(G)\big)\subseteq S.$$On can easily check that $I(G)=I_{\Delta_G}$. It is well-known that for any graph $G$, we have $\ell(I(G))=n-p$, where $n$ is the number of vertices and $p$ is the number of bipartite connected components of $G$ (see e.g. \cite[Page 50]{v}).


\section{Stanley depth of integral closure of powers of edge ideals} \label{sec3}

In this section, we study the Stanley depth of integral closure of powers of edge ideals and their quotients. In \cite{s3}, we proved that for every graph $G$ the modules $S/I(G)^k$ and $I(G)^k/I(G)^{k+1}$ satisfy Stanley's inequality for every integer $k\gg 0$. In the same paper, we also proved that for any non-bipartite graph $G$, the ideal $I(G)^k$ satisfies Stanley's inequality for every $k\gg 0$. In this section, we prove all these results are true, if one replaces the powers of $I(G)$ by their integral closure. The following lemma from \cite{s1} has a key role in this section.

\begin{lem} \label{collect}
{\rm (}\cite[Theorem 2.8]{s1}{\rm )} Let $I_2\subseteq I_1$ be two monomial ideals in $S$. Then there  exists an integer $s\geq 1$, such that for every $m\geq 1$,$${\rm sdepth} (I_1^{sm}/I_2^{sm}) \leq {\rm sdepth} (\overline{I_1}/\overline{I_2}).$$
\end{lem}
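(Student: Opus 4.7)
The plan is to combine the Herzog--Vladoiu--Zheng combinatorial description of Stanley depth with the polyhedral (Newton polyhedron) description of the integral closure of a monomial ideal. The core idea is that the lattice data defining $I_1^{sm}/I_2^{sm}$, after rescaling exponent vectors by $1/(sm)$, stabilize to the data defining $\overline{I_1}/\overline{I_2}$; for an appropriate choice of $s$, the rescaled data match exactly at the integer lattice level, and a Stanley decomposition of the former can be pushed forward to one of the latter of no smaller depth.

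First, I would set up the HVZ framework: for monomial ideals $I_2 \subseteq I_1$ in $S$, the Stanley depth of $I_1/I_2$ is computed via partitions $\mathcal{P}$ of a finite poset $P_{I_1/I_2}$ consisting of exponent vectors of monomials in $I_1 \setminus I_2$ bounded by a canonical ceiling $(a_1,\ldots,a_n)$, into intervals $[c,d]$; specifically, ${\rm sdepth}(I_1/I_2) = \max_\mathcal{P} \min_{[c,d] \in \mathcal{P}} |\{i : d_i = a_i\}|$. The problem then reduces to comparing interval partitions of the posets $P_{I_1^{sm}/I_2^{sm}}$ and $P_{\overline{I_1}/\overline{I_2}}$.

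Second, I would use the polyhedral description: $\overline{I^k}$ is the monomial ideal whose generators have exponents in $k \cdot N(I) \cap \mathbb{Z}^n$, where $N(I)$ is the Newton polyhedron of $I$. Since $N(I_1)$ and $N(I_2)$ are rational polyhedra with bounded vertex denominators, there is an integer $s_0$ such that all vertices of $N(I_j)$ lie in $\tfrac{1}{s_0}\mathbb{Z}^n$. Taking $s$ to be a multiple of $s_0$ that is also large enough to dominate the generating degrees of the integral-closure Rees algebras of $I_1$ and $I_2$, the exponent rescaling by $1/(sm)$ determines a natural projection from $P_{I_1^{sm}/I_2^{sm}}$ onto $P_{\overline{I_1}/\overline{I_2}}$ that preserves the ``ceiling-attaining-coordinates'' label $\{i : d_i = a_i\}$. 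Given any Stanley decomposition of $I_1^{sm}/I_2^{sm}$ of Stanley depth $d$, the push-forward under this projection yields an interval partition of $P_{\overline{I_1}/\overline{I_2}}$ of minimum dimension at least $d$, producing a Stanley decomposition of $\overline{I_1}/\overline{I_2}$ of depth $\geq d$, which establishes the lemma.

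The main obstacle I anticipate is making this last step rigorous: the rescale-and-round map is not automatically a clean bijection, and one must verify that intervals on the source side map to intervals on the target side with their dimensional labels preserved, and that the image pieces together form a genuine partition of $P_{\overline{I_1}/\overline{I_2}}$ (without overlap or missed lattice points). The delicate combinatorics of which lattice points in $sm \cdot N(I_j)$ belong to the ordinary power $I_j^{sm}$ as opposed to merely to $\overline{I_j^{sm}}$ is what forces $s$ to be large; absorbing this integrality defect into $s$, while simultaneously keeping the variable-set labels $\{i : d_i = a_i\}$ aligned under rescaling, is the technical crux of the argument.
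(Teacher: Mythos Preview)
The paper does not prove this lemma; it is quoted verbatim from \cite[Theorem 2.8]{s1} and used as a black box throughout Section~\ref{sec3}. There is therefore no in-paper argument to compare your proposal against.

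On the substance of your sketch: the toolbox is the right one (the Herzog--Vladoiu--Zheng poset description together with the Newton-polyhedron description of integral closure), and the inequality does come down to a comparison of interval partitions. The one conceptual correction I would make is the direction of the map. You phrase the key step as a \emph{projection} $P_{I_1^{sm}/I_2^{sm}}\to P_{\overline{I_1}/\overline{I_2}}$ obtained by rescaling exponents by $1/(sm)$ and rounding; this is exactly the source of the ``integrality defect'' you flag as the main obstacle, and it is genuinely awkward to make rigorous. The cleaner route is the dual one: the map $\alpha\mapsto sm\,\alpha$ on exponent vectors is an \emph{embedding} of the monomials of $\overline{I_1}\setminus\overline{I_2}$ into those of $I_1^{sm}\setminus I_2^{sm}$, once $s$ is chosen so that $u^s\in I_1^s$ for every minimal generator $u$ of $\overline{I_1}$ (and $u\notin\overline{I_2}$ automatically gives $u^{sm}\notin I_2^{sm}$). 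With the bounding vector for the target poset taken to be $sm\cdot g$, an interval partition of $P_{I_1^{sm}/I_2^{sm}}$ \emph{pulls back}: the preimage of $[c,d]$ under $\alpha\mapsto sm\,\alpha$ is the interval $[\lceil c/(sm)\rceil,\lfloor d/(sm)\rfloor]$, these preimages are disjoint and cover $P_{\overline{I_1}/\overline{I_2}}$, and whenever $d_i=sm\,g_i$ one has $\lfloor d_i/(sm)\rfloor=g_i$, so the ceiling-attaining set can only grow. This gives the desired inequality without any rounding ambiguity, and it is essentially the argument in \cite{s1}.
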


The following two theorems are the first main results of this section and they follow from Lemma \ref{collect} and the results of \cite{s3}.

\begin{thm} \label{quo}
Let $G$ be a graph with edge ideal $I=I(G)$. Suppose that $p$ is the number of bipartite connected components of $G$. Then for every integer $k\geq 1$, we have ${\rm sdepth}(S/\overline{I^k})\geq p$. In particular, $S/\overline{I^k}$ satisfies Stanley's inequality for every integer $k\gg 0$.
\end{thm}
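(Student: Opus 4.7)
The plan is to use Lemma \ref{collect} to transfer a lower Stanley-depth bound from the ordinary powers $I(G)^t$ to the integral closure $\overline{I(G)^k}$. Fix an arbitrary $k\geq 1$ and apply Lemma \ref{collect} to the pair $I_1=S$ and $I_2=I(G)^k$. Since $\overline{S}=S$ and $(I(G)^k)^{sm}=I(G)^{ksm}$, the lemma provides an integer $s\geq 1$ (depending on $k$) such that
$${\rm sdepth}(S/I(G)^{ksm})\leq {\rm sdepth}(S/\overline{I(G)^k})$$
for every $m\geq 1$.

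Next I will invoke \cite[Corollary 2.5]{s3} (or rather, the Stanley-decomposition argument that underlies its proof), which yields the uniform bound ${\rm sdepth}(S/I(G)^t)\geq p$ for every integer $t\geq 1$. Applying this with $t=ks$, i.e.\ taking $m=1$, and combining with the previous inequality gives ${\rm sdepth}(S/\overline{I(G)^k})\geq p$, establishing the first assertion.

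For the ``in particular'' part, I would couple the just-proved bound with Hoa--Trung \cite[Lemma 1.5]{ht}, which asserts $\lim_{k\to\infty}{\rm depth}(S/\overline{I(G)^k})=n-\ell(I(G))$. Since $\ell(I(G))=n-p$ by the Preliminaries, we obtain ${\rm depth}(S/\overline{I(G)^k})=p$ for $k\gg 0$, and the inequality ${\rm sdepth}(S/\overline{I(G)^k})\geq p$ then yields Stanley's inequality for all such $k$.

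The main obstacle I anticipate is securing the key input ${\rm sdepth}(S/I(G)^t)\geq p$ for \emph{every} $t\geq 1$, not merely for $t$ large enough that ${\rm depth}(S/I(G)^t)$ has stabilized to its asymptotic value $n-\ell(I(G))$: the integer $s$ produced by Lemma \ref{collect} depends on $k$, so we need the bound to hold at the specific power $ks$ rather than only asymptotically. If \cite[Corollary 2.5]{s3} only records Stanley's inequality for $t\geq n-1$, I would need to isolate from its proof the explicit Stanley decomposition of $S/I(G)^t$ whose smallest Stanley-space dimension equals $p$; that construction is visibly uniform in $t$ and independent of whether the depth has yet reached its limit.
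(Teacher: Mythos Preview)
Your proof is correct and follows essentially the same route as the paper's. The one refinement is that the paper cites \cite[Theorem~2.3]{s3} rather than \cite[Corollary~2.5]{s3}; that theorem states the uniform bound ${\rm sdepth}(S/I(G)^t)\geq p$ for every $t\geq 1$ directly, which resolves the obstacle you anticipated in your final paragraph.
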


\begin{proof}
By Lemma \ref{collect}, there  exists an integer $s\geq 1$,
$${\rm sdepth} (S/\overline{I^k})\geq {\rm sdepth} (S/I^{ks}).$$On the other hand, it follows from \cite[Theorem 2.3]{s3} that ${\rm sdepth} (S/I^{ks})\geq p$. This proves the first assertion. The last statement follows from \cite[Lemma 1.5]{ht}, together with the fact that $\ell(I)=n-p$.
\end{proof}

\begin{thm} \label{ide}
Let $G$ be a non-bipartite graph with edge ideal $I=I(G)$. Suppose that $p$ is the number of bipartite connected components of $G$. Then for every integer $k\geq 1$, we have ${\rm sdepth}(\overline{I^k})\geq p+1$. In particular, $\overline{I^k}$ satisfies Stanley's inequality for every integer $k\gg 0$.
\end{thm}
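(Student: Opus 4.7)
The plan is to mimic the proof of Theorem \ref{quo} almost verbatim, replacing the quotient $S/I^{ks}$ by the ideal $I^{ks}$ and the cited bound $\mathrm{sdepth}(S/I^{ks})\geq p$ by its counterpart for ideals in the non-bipartite case.

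First I would apply Lemma \ref{collect} to the pair $I_1=I^k$ and $I_2=(0)$. This yields an integer $s\geq 1$ with
$$\mathrm{sdepth}\bigl(I^{ksm}\bigr)\leq \mathrm{sdepth}\bigl(\overline{I^k}\bigr)\qquad\text{for every }m\geq 1.$$
Specializing to $m=1$ gives $\mathrm{sdepth}(\overline{I^k})\geq\mathrm{sdepth}(I^{ks})$. Then I would invoke the non-bipartite analogue of \cite[Theorem 2.3]{s3} (the companion statement underlying \cite[Corollary 3.6]{s3}), which asserts that for a non-bipartite graph $G$ and every integer $t\geq 1$ one has $\mathrm{sdepth}(I(G)^t)\geq p+1$. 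Applying it with $t=ks$ produces the required bound $\mathrm{sdepth}(\overline{I^k})\geq p+1$.

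For the ``in particular'' clause, I would combine two inputs. By Hoa--Trung \cite[Lemma 1.5]{ht}, $\lim_{k\to\infty}\mathrm{depth}(S/\overline{I^k})=n-\ell(I)=p$, and since the sequence $\{\mathrm{depth}(S/\overline{I^k})\}$ is eventually constant (the integral closure filtration has a finitely generated Rees algebra, so Brodmann's argument applies), we actually have $\mathrm{depth}(S/\overline{I^k})=p$ for $k\gg 0$. The depth lemma then gives $\mathrm{depth}(\overline{I^k})=p+1$ for $k\gg 0$, and together with the first part this yields $\mathrm{depth}(\overline{I^k})\leq \mathrm{sdepth}(\overline{I^k})$, i.e.\ Stanley's inequality for $\overline{I^k}$.

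The only real obstacle I anticipate is a technical one: ensuring that the result borrowed from \cite{s3} really gives the lower bound $p+1$ on $\mathrm{sdepth}(I(G)^{t})$ for \emph{all} $t\geq 1$, not merely for $t\geq n-1$. If only the latter is available, the argument is still fine: the flexibility built into Lemma \ref{collect}, which provides the inequality simultaneously for every $m\geq 1$, allows one to replace $s$ by any multiple $sm$, and in particular to choose $m$ large enough so that $ksm\geq n-1$. Thus the same chain of inequalities goes through regardless of which version of the auxiliary result one has at hand.
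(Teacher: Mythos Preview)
Your proposal is correct and matches the paper's own proof almost exactly: the paper applies Lemma~\ref{collect} and then invokes \cite[Corollary 3.2]{s3} (this is the ``companion statement'' you guessed at, and it does give ${\rm sdepth}(I(G)^t)\geq p+1$ for \emph{all} $t\geq 1$, so your anticipated obstacle never arises). The ``in particular'' clause is handled just as you outline, via \cite[Lemma 1.5]{ht} and $\ell(I)=n-p$.
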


\begin{proof}
The proof is similar to the proof Theorem \ref{quo}. The only difference is that one should use \cite[Corollary 3.2]{s3} instead of \cite[Theorem 2.3]{s3}.
\end{proof}

Assume that $G$ is a bipartite graph. By \cite[Theorem 1.4.6 and Corollay 10.3.17]{hh'}, we know that $I(G)$ is a normal ideal. Thus, the study of the Stanley depth of $\overline{I(G)^k}$ is nothing other than that of $I(G)^k$. We do not know whether for a bipartite graph $G$, the ideal $I(G)^k$ satisfies Stanley's inequality, for any integer $k\gg 0$. However, we proved in \cite[Corollary 3.6]{s3} that $I(G)^k$ satisfies Stanley's conjecture for any integer $k\gg 0$, provided that $G$ has a connected component which is a tree (with at least one edge). In the same paper, we also proposed Question \ref{q4} and proved that $I(G)^k$ satisfies Stanley's inequality, for every bipartite graph $G$ and for every integer $k \gg 0$, provided that the answer of Question \ref{q4} is positive. In \cite[Proposition 3.4]{s3}, we gave a positive answer to this question in the case $G$ is a tree. This result will be generalized in the following theorem.

\begin{thm} \label{girth}
Let $G$ be a connected bipartite graph (with at least one edge) and suppose that ${\rm girth}(G)=g$. Then for every positive integer $k\leq g/2+1$, we have ${\rm sdepth}(I(G)^k)\geq 2$.
\end{thm}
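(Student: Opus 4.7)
The plan is to proceed by induction on $|V(G)|$, with the tree case \cite[Proposition 3.4]{s3} serving as the base case (when $G$ is a tree, $g = \infty$ and the girth hypothesis is vacuous).

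For the inductive step, let $G$ be a connected bipartite graph with finite girth $g$ and $k \leq g/2+1$. Choose a vertex $v \in V(G)$, let $u_1,\ldots,u_d$ be its neighbors in $G$, and set $S_v := \mathbb{K}[x_u : u \in V(G) \setminus \{v\}]$. Decomposing $I(G)^k$ by $x_v$-degree as a $\mathbb{Z}^n$-graded $\mathbb{K}$-vector space yields
$$I(G)^k \;=\; \bigoplus_{j=0}^{k-1} x_v^j L_j \,\oplus\, x_v^k L_k \cdot \mathbb{K}[x_v],$$
where, for $0 \leq j \leq k$,
$$L_j \;=\; I(G \setminus v)^{k-j} \cdot \bigl((x_{u_1},\ldots,x_{u_d}) + I(G \setminus v)\bigr)^{j} \;\subseteq\; S_v$$
is the ideal of those monomials $m \in S_v$ with $x_v^j m \in I(G)^k$. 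Consequently,
$${\rm sdepth}_S(I(G)^k) \;\geq\; \min\bigl(\{{\rm sdepth}_{S_v}(L_j) : 0 \leq j \leq k-1\} \cup \{{\rm sdepth}_{S_v}(L_k) + 1\}\bigr).$$

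For $j = 0$ one has $L_0 = I(G \setminus v)^k$, and since deleting $v$ cannot shrink the girth, each connected component of $G \setminus v$ with at least one edge is bipartite of girth at least $g$; applying the inductive hypothesis componentwise gives ${\rm sdepth}_{S_v}(L_0) \geq 2$. For $1 \leq j \leq k-1$ we must show ${\rm sdepth}_{S_v}(L_j) \geq 2$, and for $j = k$ that ${\rm sdepth}_{S_v}(L_k) \geq 1$. Here the girth hypothesis $k \leq g/2+1$ is decisive: it forbids cycles through $v$ of length less than $2(k-1)$, so the combinatorial structure of $L_j$---a product of an edge-ideal power with a power of an ``augmented'' edge ideal---mimics the tree case and should admit a Stanley decomposition of the required depth.

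Main obstacle: establishing the Stanley depth bounds on $L_j$ for $j \geq 1$. These ideals are not themselves powers of edge ideals, so the inductive hypothesis does not apply to them directly; one must either construct an explicit Stanley decomposition (in the spirit of the tree-case construction in \cite[Proposition 3.4]{s3}) or reduce to the inductive hypothesis via an intermediary construction. The girth condition ensures that any cycle through $v$ has length at least $g \geq 2(k-1)$, which is what controls the overlap among the generators of $L_j$ needed for assembling the Stanley decomposition; verifying this bookkeeping---particularly the sharpness of the bound $k \leq g/2+1$---is the combinatorial heart of the argument.
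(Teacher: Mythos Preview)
Your proposal identifies the right inductive framework but stalls precisely where the argument has content. The ideals $L_j$ for $j\geq 1$ are genuinely intractable as stated: they are products of an edge-ideal power with a power of an ideal that is not an edge ideal, and there is no general mechanism to bound the Stanley depth of such products. You acknowledge this as the ``main obstacle,'' but it is not a bookkeeping matter---it is the entire proof, and your decomposition by $x_v$-degree does not suggest a way through it.

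The paper's proof avoids this by decomposing along a \emph{path of distinct vertices} rather than by powers of a single vertex. Fix a shortest cycle $C$ on vertices $v_1,\dots,v_g$ and iterate the binary splitting $J = (J\cap S_i)\oplus x_i(J:x_i)$ for $i=1,2,\dots,2k-2$, walking along $C$. At the $i$-th step the ``restricted'' piece is $\big((I(G)^k\cap S_{i+1}):x_1\cdots x_i\big)$; since $I(G)^k\cap S_{i+1}=I(G\setminus v_{i+1})^k$ and deleting a vertex cannot decrease girth, the inductive hypothesis gives ${\rm sdepth}_{S_{i+1}}(I(G\setminus v_{i+1})^k)\geq 2$, and then the standard fact that colon by a monomial does not decrease Stanley depth (\cite[Proposition~2]{p}) finishes that piece. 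After $2k-2$ steps one is left with the single ideal $(I(G)^k:x_1x_2\cdots x_{2k-2})$. Here the girth hypothesis is used: because $2k-2\leq g$, the monomial $x_1\cdots x_{2k-2}$ is a product of $k-1$ edge monomials of $G$, and by \cite[Proposition~3.2]{ab} the colon $(I(G)^k:x_1\cdots x_{2k-2})$ is itself the edge ideal $I(G')$ of some bipartite graph $G'$ on the same vertex set. Then ${\rm sdepth}(I(G'))\geq 2$ by \cite[Corollary~3.4]{s2}, closing the induction.

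The two ingredients you are missing are (i) peeling off \emph{distinct} variables along the shortest cycle rather than powers of one variable, and (ii) the Alilooee--Banerjee result that the final colon ideal is again a bipartite edge ideal. Without these, the pieces $L_j$ in your decomposition have no usable structure.
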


\begin{proof}
If $g=\infty$, i.e., if $G$ is a tree, the assertion follows from \cite[Proposition 3.4]{s3}. Thus, assume that $g$ is finite. As $G$ is a bipartite graph, $g$ is an even integer. Assume that $g=2r$ and let $k$ be a positive integer with $k\leq r+1$. We must prove that ${\rm sdepth}(I(G)^k)\geq 2$. For $k=1$, the desired inequality follows from \cite[Corollary 3.4]{s2}. Thus, assume that $k\geq 2$. We use induction on the number of vertices of $G$, say $n$. Let $C$ be a cycle of $G$ of length $g=2r$. Without lose of generality, we assume that $V(C)=\{v_1, \ldots, v_{2r}\}$ and$$E(C)=\big\{\{v_1, v_2\}, \{v_2, v_3\}, \ldots, \{v_{2r-1}, v_{2r}\}, \{v_1, v_{2r}\}\big\}.$$Let $S_1=\mathbb{K}[x_2, \ldots, x_n]$ be the polynomial ring obtained from $S$ by deleting the variable $x_1$ and consider the ideals $I_1=I(G)^k\cap S_1$ and
$I_1'=(I(G)^k:x_1)$.

Now $I(G)^k=I_1\oplus x_1I_1'$ (as vector spaces) and therefore by definition of  the Stanley depth we have
\[
\begin{array}{rl}
{\rm sdepth}(I(G)^k)\geq \min \{{\rm sdepth}_{S_1}(I_1), {\rm sdepth}_S(I_1')\}.
\end{array} \tag{1} \label{3}
\]

Notice that $I_1=I(G\setminus v_1)^k$. Since$$k\leq \frac{{\rm girth}(G)}{2}+1\leq \frac{{\rm girth}(G\setminus v_1)}{2}+1,$$the induction hypothesis implies that ${\rm sdepth}_{S_1}(I_1)\geq 2$. Thus, using the inequality (\ref{3}), it is enough to prove that ${\rm sdepth}_S(I_1')\geq 2$.

For every integer $i$ with $2\leq i\leq 2k-2$, let $S_i=\mathbb{K}[x_1, \ldots, x_{i-1}, x_{i+1}, \ldots, x_n]$ be the polynomial ring obtained from $S$ by deleting the variable $x_i$ and consider the ideals $I_i'=(I_{i-1}':x_i)$ and $I_i=I_{i-1}'\cap S_i$.

{\bf Claim.} For every integer $i$ with $1\leq i\leq 2k-3$ we have$${\rm sdepth}(I_i')\geq \min \{2, {\rm sdepth}(I_{i+1}')\}.$$

{\it Proof of the Claim.} For every integer $i$ with $1\leq i\leq 2k-3$, we have $I_i'=I_{i+1}\oplus x_{i+1}I_{i+1}'$ (as vector spaces) and therefore by definition of  the Stanley depth we have
\[
\begin{array}{rl}
{\rm sdepth}(I_i')\geq \min \{{\rm sdepth}_{S_{i+1}}(I_{i+1}), {\rm sdepth}_S(I_{i+1}')\},
\end{array} \tag{2} \label{4}
\]

Notice that for every integer $i$ with $1\leq i\leq 2k-3$, we have $I_i'=(I(G)^k:x_1x_2\ldots x_i)$. Thus$$I_{i+1}=I_i'\cap S_{i+1}=((I(G)^k\cap S_{i+1}):_{S_{i+1}}x_1x_2\ldots x_i).$$Hence, using \cite[Proposition 2]{p} (see also \cite[Proposition 2.5]{s4}, we conclude that

\[
\begin{array}{rl}
{\rm sdepth}_{S_{i+1}}(I_{i+1})\geq {\rm sdepth}_{S_{i+1}}(I(G)^k\cap S_{i+1}).
\end{array} \tag{3} \label{5}
\]

Note that $I(G)^k\cap S_{i+1}=I(G\setminus v_i)^k$. Since$$k\leq \frac{{\rm girth}(G)}{2}+1\leq \frac{{\rm girth}(G\setminus v_i)}{2}+1,$$the induction hypothesis implies that ${\rm sdepth}_{S_{i+1}}(I(G)^k\cap S_{i+1})\geq 2$. Hence, the claim follows by inequalities (\ref{4}), and (\ref{5}).

It is clear that $I_{2k-2}'=(I(G)^k:x_1x_2\ldots x_{2k-2})$. Thus, by \cite[Proposition 3.2]{ab}, there exists a bipartite graph $G'$ with $V(G')=V(G)$ such that $I(G')=(I(G)^k:x_1x_2\ldots x_{2k-2})$. Therefore, \cite[Corollary 3.4]{s2} implies that$${\rm sdepth}(I_{2k-2}')={\rm sdepth}((I(G)^k:x_1x_2\ldots x_{2k-2}))={\rm sdepth}(I(G'))\geq 2.$$ Therefore, using the claim repeatedly, we conclude that ${\rm sdepth}(I_1')\geq 2$. This completes the proof of the theorem.
\end{proof}

As a consequence of Theorem \ref{girth}, we obtain the following corollary.

\begin{cor} \label{discon}
Let $G$ be a bipartite graph with at least one edge. Suppose that $p$ is the number of connected components of $G$ and assume that $g$ is the maximum girth of connected components of $G$. Then for every positive integer $k\leq g/2+1$, we have ${\rm sdepth}(I(G)^k)\geq p+1$.
\end{cor}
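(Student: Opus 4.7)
The plan is to reduce the corollary to the connected case handled by Theorem~\ref{girth} and then extend by the standard behaviour of Stanley depth under adjunction of polynomial variables. Write $G = G_1 \sqcup \cdots \sqcup G_p$ for the decomposition into connected components, and let $S_i$ denote the polynomial subring of $S$ on the variables corresponding to $V(G_i)$, so that $S = S_1 \otimes_{\mathbb{K}} \cdots \otimes_{\mathbb{K}} S_p$. Because $G$ has at least one edge, I can relabel so that $G_1$ has at least one edge and $\mathrm{girth}(G_1) = g$: if $g$ is finite, any component realizing $g$ contains a cycle, and if $g = \infty$, any tree component with an edge does the job.

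Next, I would apply Theorem~\ref{girth} to the connected bipartite graph $G_1$ with $k \leq g/2 + 1$ to obtain
$$\mathrm{sdepth}_{S_1}\bigl(I(G_1)^k\bigr) \geq 2.$$
It then suffices to establish the additivity inequality
$$\mathrm{sdepth}_S\bigl(I(G)^k\bigr) \geq \mathrm{sdepth}_{S_1}\bigl(I(G_1)^k\bigr) + (p - 1),$$
because each of the $p-1$ components $G_2,\ldots,G_p$ supplies at least one variable to $S$, so the right-hand side is at least $2 + (p-1) = p + 1$. In the easy special case that $G_2, \ldots, G_p$ are all isolated vertices, $I(G)^k = I(G_1)^k \cdot S$, and the well-known formula that adjoining $t$ polynomial variables raises Stanley depth of a monomial ideal by exactly $t$ yields the inequality immediately.

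The main obstacle is the general case where some $G_j$ with $j \geq 2$ carries edges: then $I(G)^k$ strictly contains $I(G_1)^k \cdot S$, and one cannot simply lift the Stanley decomposition of $I(G_1)^k$ given by Theorem~\ref{girth}. My proposed route is to partition the monomials of $I(G)^k$ according to the tuple of $I(G_i)$-adic orders $a_i(u) = \max\{a \geq 0 : u|_{V(G_i)} \in I(G_i)^a\}$, giving a $\mathbb{K}$-vector space decomposition of $I(G)^k$ indexed by tuples $(a_1,\ldots,a_p)$ with $a_1 + \cdots + a_p \geq k$. Each piece is a tensor product (over $\mathbb{K}$) of shells of the form $I(G_i)^{a_i} \setminus I(G_i)^{a_i+1}$ living in pairwise disjoint variable sets, so Stanley decompositions of the factors assemble into a Stanley decomposition of the piece whose depth is the sum of the factor depths. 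The genuinely delicate step will be controlling the Stanley depths of the intermediate shells on the components $G_j$ with $j \geq 2$, since Theorem~\ref{girth} only bounds the top power $I(G_i)^k$; I expect this to require either adapting the induction in the proof of Theorem~\ref{girth} to produce uniform bounds across all intermediate shells, or an auxiliary argument that directs the decomposition so as to concentrate the "hard" part on $G_1$, where Theorem~\ref{girth} already delivers the required bound, while the contributions from the other components furnish at least one extra free variable apiece.
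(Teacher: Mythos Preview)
Your multi-index shell decomposition leaves a real gap, which you yourself flag. To extract $\mathrm{sdepth}\geq p+1$ from a tensor product of $p$ shells you need at least one factor to contribute $\geq 2$, and Theorem~\ref{girth} bounds $\mathrm{sdepth}_{S_1}\bigl(I(G_1)^{a_1}\bigr)$, not $\mathrm{sdepth}_{S_1}\bigl(I(G_1)^{a_1}/I(G_1)^{a_1+1}\bigr)$; the latter is what your decomposition actually requires. (Incidentally, the shells on $G_2,\ldots,G_p$ are not where the difficulty lies: \cite[Theorem~2.2]{s3} already gives each of those Stanley depth $\geq 1$, which is all you want from them. The problematic factor is the one on $G_1$, and there you must also cope with the piece $a_1=0$, where the $G_1$-contribution is $S_1/I(G_1)$.)

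The paper sidesteps shells entirely. It invokes \cite[Theorem~3.1]{s3}, which for a chosen connected component $H$ with $\mathrm{girth}(H)=g$ and $S'=\mathbb{K}[x_i:v_i\in V(H)]$ gives
\[
\mathrm{sdepth}_S\bigl(I(G)^k\bigr)\ \geq\ \min_{1\leq l\leq k}\mathrm{sdepth}_{S'}\bigl(I(H)^l\bigr)\,+\,(p-1).
\]
The observation that then closes the argument---and that your write-up never makes---is that Theorem~\ref{girth} applies to \emph{every} $I(H)^l$ with $1\leq l\leq k$, not just to $l=k$, because $l\leq k\leq g/2+1$. Hence the minimum on the right is at least $2$ and the corollary follows in one line. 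Your own second suggested fix, ``concentrate the hard part on $G_1$,'' is exactly this route; the correct reduction is to the full range of \emph{ideals} $I(H)^l$ for $l\leq k$, rather than to shells, and Theorem~\ref{girth} is already strong enough to handle that entire range.
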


\begin{proof}
Let $H$ be a connected component of $G$ with ${\rm girth}(H)=g$ and set $S'=\mathbb{K}[x_i\mid v_i\in V(H)]$. It follows from \cite[Theorem 3.1]{s3} and Theorem \ref{girth} that for every positive integer $k\leq g/2+1$,$${\rm sdepth}(I(G)^k)\geq \min_{1\leq l \leq k}\{{\rm sdepth}_{S'}(I(H)^l)\}+p-1\geq p+1.$$
\end{proof}

Let $G$ be an arbitrary graph. Our next goal in this section is to study the Stanley depth of the modules in the form $\overline{I(G)^k}/\overline{I(G)^{k+1}}$. We will see in Corollary \ref{sconj2} that these modules satisfy Stanley's inequality for every integer $k\gg 0$. The proof of this result is also based on Lemma \ref{collect}. However, we first need the following lemma.

\begin{lem} \label{stwoquo}
Let $G$ be a graph with edge ideal $I=I(G)$. Suppose that $p$ is the number of bipartite connected components of $G$. Then for every pair of integers $s>t\geq 0$, we have ${\rm sdepth}(I^t/I^s)\geq p$.
\end{lem}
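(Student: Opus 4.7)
The plan is to proceed by induction on the difference $s-t$. For the inductive step, when $s-t\ge 2$, I would invoke the short exact sequence of $\mathbb{Z}^{n}$-graded $S$-modules
$$0\to I^{t+1}/I^{s}\to I^{t}/I^{s}\to I^{t}/I^{t+1}\to 0$$
together with Rauf's lemma for Stanley depth, which guarantees that the middle term has sdepth at least the minimum of the sdepths of the two outer terms. The inductive hypothesis applied to the pair $(t+1,s)$, whose difference equals $s-t-1$, gives ${\rm sdepth}(I^{t+1}/I^{s})\ge p$, so the whole argument reduces to the base case ${\rm sdepth}(I^{t}/I^{t+1})\ge p$ for every $t\ge 0$.

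For the base case at $t=0$, we simply have $I^{0}/I^{1}=S/I$, and ${\rm sdepth}(S/I)\ge p$ is the $k=1$ instance of \cite[Theorem 2.3]{s3}. For $t\ge 1$, my plan is to reopen the construction underlying \cite[Theorem 2.3]{s3}: that proof produces a Stanley decomposition of $S/I^{t+1}$ of depth at least $p$ by exploiting one free variable per bipartite connected component of $G$. I would argue that this decomposition can be chosen, or refined without loss of depth, to be compatible with the $I$-adic filtration of $S/I^{t+1}$, in the sense that every Stanley space in the decomposition lies inside a single subquotient $I^{j}/I^{j+1}$. Collecting only the Stanley spaces at level $j=t$ would then yield a Stanley decomposition of the submodule $I^{t}/I^{t+1}$ of depth $\ge p$, completing the base case.

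The main obstacle is precisely this compatibility with the $I$-adic filtration. A priori, a Stanley space $u_{j}\mathbb{K}[Z_{j}]$ inside $S/I^{t+1}$ may straddle several levels of the filtration, since multiplying $u_{j}$ by certain monomials in $\mathbb{K}[Z_{j}]$ can push the product into a higher power of $I$. The challenge is to split any such straddling Stanley space along the boundary between $I^{t}$ and $I^{t-1}$ while preserving the $p$ free variables contributed by the bipartite components of $G$; this is where the combinatorics of the edges of $G$ and the careful choice of one pivot vertex from each bipartite component has to be leveraged. Once such a level-compatible refinement of the Stanley decomposition from \cite[Theorem 2.3]{s3} is secured, both the base case and the inductive step of the induction on $s-t$ fall into place routinely.
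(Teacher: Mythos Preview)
Your reduction step is fine and is essentially the same device the paper uses: the paper writes the $\mathbb{K}$-vector space splitting $I^{t}/I^{s}=\bigoplus_{k=t}^{s-1}I^{k}/I^{k+1}$ and takes the minimum of the Stanley depths of the summands, which amounts to the same thing as your induction on $s-t$ via the short exact sequence and Rauf's lemma.

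The gap is in your handling of the base case. The inequality ${\rm sdepth}(I^{t}/I^{t+1})\geq p$ for all $t\geq 0$ is exactly \cite[Theorem 2.2]{s3}, and the paper simply cites it. You instead cite \cite[Theorem 2.3]{s3} (which gives ${\rm sdepth}(S/I^{t+1})\geq p$) and then propose to extract the level-$t$ piece by refining a Stanley decomposition of $S/I^{t+1}$ so that every Stanley space sits inside a single filtration layer $I^{j}/I^{j+1}$. You correctly identify this as the main obstacle, and you do not resolve it: a Stanley space $u\,\mathbb{K}[Z]$ can genuinely straddle several layers whenever some variable in $Z$ multiplies $u$ into a higher power of $I$, and splitting such a space along the filtration will in general produce pieces with strictly fewer free variables, so the depth bound need not survive. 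There is no reason to expect the particular decomposition produced in \cite[Theorem 2.3]{s3} to be filtration-compatible, and making it so would require an argument at least as delicate as the direct proof of \cite[Theorem 2.2]{s3} itself. In short, you are trying to re-derive \cite[Theorem 2.2]{s3} from \cite[Theorem 2.3]{s3} by a route that is harder than either original proof; just cite \cite[Theorem 2.2]{s3} and the lemma is immediate.
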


\begin{proof}
Note that$$I^t/I^s=\bigoplus_{k=t}^{s-1}I^k/I^{k+1}.$$By the definition of Stanley depth we conclude that$${\rm sdepth}(I^t/I^s)\geq \min\big\{{\rm sdepth}(I^k/I^{k+1}) \mid k=t, \ldots, s-1\big\}\geq p,$$where the last inequality follows from \cite[Theorem 2.2]{s3}.
\end{proof}

In the next theorem we will show that the number of bipartite connected components of $G$ is a lower bound for the Stanley depth of $\overline{I(G)^k}/\overline{I(G)^{k+1}}$.

\begin{thm} \label{twoquo}
Let $G$ be a graph with edge ideal $I=I(G)$. Suppose that $p$ is the number of bipartite connected components of $G$. Then for every integer $k\geq 0$, we have ${\rm sdepth}(\overline{I^k}/\overline{I^{k+1}})\geq p$.
\end{thm}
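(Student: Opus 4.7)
The plan is to mirror exactly the two-line strategy used in the proofs of Theorems \ref{quo} and \ref{ide}, combining Lemma \ref{collect} with Lemma \ref{stwoquo}. The observation that makes this work is that one is free to apply Lemma \ref{collect} not only to pairs like $(S,I)$ but to any nested pair of monomial ideals, in particular to consecutive powers $I^{k+1}\subseteq I^k$.

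Fix $k\geq 1$. I would apply Lemma \ref{collect} to $I_1=I^k$ and $I_2=I^{k+1}$. Since $\overline{I_1}=\overline{I^k}$ and $\overline{I_2}=\overline{I^{k+1}}$, the lemma produces an integer $s\geq 1$ (taking $m=1$ is enough) such that
$$ {\rm sdepth}\bigl((I^k)^{s}/(I^{k+1})^{s}\bigr)\;\leq\;{\rm sdepth}\bigl(\overline{I^k}/\overline{I^{k+1}}\bigr). $$
Rewriting the left-hand side as ${\rm sdepth}(I^{ks}/I^{(k+1)s})$ and noting that $(k+1)s>ks\geq 0$, Lemma \ref{stwoquo} gives ${\rm sdepth}(I^{ks}/I^{(k+1)s})\geq p$. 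Chaining the two inequalities yields ${\rm sdepth}(\overline{I^k}/\overline{I^{k+1}})\geq p$, as required.

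The remaining case $k=0$ must be addressed separately, but it is immediate: because $I=I(G)$ is a squarefree monomial ideal it is integrally closed, so $\overline{I^0}/\overline{I^1}=S/I$, and Theorem \ref{quo} (applied at $k=1$, or directly \cite[Theorem 2.3]{s3}) gives ${\rm sdepth}(S/I)\geq p$.

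There is no real obstacle here; the content of the theorem is entirely packaged inside the two previously established lemmas. The only point worth being careful about is making sure the integer $s$ produced by Lemma \ref{collect} is allowed to depend on $k$ (which is fine, since the claim is proved one $k$ at a time) and that one does not accidentally try to invoke Lemma \ref{stwoquo} with $s=t$, which is why we use $s\geq 1$ rather than $s\geq 0$ from the start.
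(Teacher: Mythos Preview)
Your argument is correct and is exactly the paper's proof: apply Lemma \ref{collect} with $I_1=I^k$, $I_2=I^{k+1}$ and $m=1$, then invoke Lemma \ref{stwoquo}. The only difference is that the paper treats all $k\geq 0$ at once (for $k=0$ one has $I_1=S$, $I_2=I$, and Lemma \ref{stwoquo} applies with $t=0<s$), so your separate handling of $k=0$ is harmless but unnecessary.
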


\begin{proof}
By Lemma \ref{collect}, for every integer $k\geq 0$, there exists an integer $s\geq 1$ such that$${\rm sdepth}(\overline{I^k}/\overline{I^{k+1}})\geq {\rm sdepth}(I^{sk}/I^{s(k+1)}),$$(we set $m=1$ in Lemma \ref{collect}). Thus, Lemma \ref{stwoquo} implies that ${\rm sdepth}(\overline{I^k}/\overline{I^{k+1}})\geq p$.
\end{proof}

In Corollary \ref{gtwoquo}, we will prove that for any graph $G$ with $p$ bipartite connected components,$$\lim_{k\rightarrow\infty}{\rm depth}(\overline{I(G)^k}/\overline{I(G)^{k+1}})=p.$$Thus, as a consequence of Theorem \ref{twoquo}, we obtain the following result.

\begin{cor} \label{sconj2}
Let $G$ be a graph with edge ideal $I=I(G)$. Then $\overline{I^k}/\overline{I^{k+1}}$ satisfies Stanley's inequality, for every integer $k\gg 0$.
\end{cor}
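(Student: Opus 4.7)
The plan is to combine Theorem \ref{twoquo} (a lower bound on the Stanley depth that is independent of $k$) with the asymptotic depth computation that will be recorded in Corollary \ref{gtwoquo}. Specifically, Theorem \ref{twoquo} gives that for every $k\geq 0$,
\[
{\rm sdepth}\bigl(\overline{I^k}/\overline{I^{k+1}}\bigr)\geq p,
\]
where $p$ is the number of bipartite connected components of $G$. So once we have an identification of the asymptotic value of the left-hand side of the Stanley inequality with $p$, the corollary is immediate.

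The second ingredient I need is the limit
\[
\lim_{k\to\infty}{\rm depth}\bigl(\overline{I^k}/\overline{I^{k+1}}\bigr)=p,
\]
which is precisely the statement of Corollary \ref{gtwoquo}. Granting that (as the excerpt does), for every sufficiently large $k$ one has ${\rm depth}(\overline{I^k}/\overline{I^{k+1}})=p$. The depth is an integer-valued sequence that converges, so from some point on it is constant and equal to the limit $p$.

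Putting the two facts together, for $k\gg 0$,
\[
{\rm depth}\bigl(\overline{I^k}/\overline{I^{k+1}}\bigr)=p\leq {\rm sdepth}\bigl(\overline{I^k}/\overline{I^{k+1}}\bigr),
\]
which is Stanley's inequality for the module $\overline{I^k}/\overline{I^{k+1}}$. This is the entire argument; there is no real obstacle here, since all the heavy lifting has been absorbed into Theorem \ref{twoquo} (whose proof rests on Lemma \ref{collect} together with Lemma \ref{stwoquo}) and into Corollary \ref{gtwoquo} (which computes the limit depth of the associated graded pieces of the integral closure filtration, using the convergence statement proved in Theorem \ref{dtwoquo} and the formula $\ell(I(G))=n-p$). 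The only care one must take is to note that the value of $s$ provided by Lemma \ref{collect}, and hence the threshold beyond which equality in the depth limit holds, may depend on $k$; however this is harmless, because the inequality ${\rm sdepth}(\overline{I^k}/\overline{I^{k+1}})\geq p$ holds uniformly in $k$, so we only need the depth to coincide with $p$ for all $k$ past some threshold.
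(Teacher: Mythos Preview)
Your proof is correct and follows exactly the paper's approach: the paper derives Corollary \ref{sconj2} by combining Theorem \ref{twoquo} (the uniform bound ${\rm sdepth}(\overline{I^k}/\overline{I^{k+1}})\geq p$) with Corollary \ref{gtwoquo} (the limit $\lim_{k\to\infty}{\rm depth}(\overline{I^k}/\overline{I^{k+1}})=p$), precisely as you do. Your additional remarks about the threshold and the integer $s$ from Lemma \ref{collect} are accurate but superfluous, since the sdepth bound holds for all $k$.
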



\section{Depth of integral closure of powers of monomial ideals} \label{sec4}

In This section, we study the depth of the integral closure of powers of monomial ideals. As we promised in Section \ref{sec3}, our first goal is to prove that for every graph $G$ with $p$ bipartite connected components,$$\lim_{k\rightarrow\infty}{\rm depth}(\overline{I(G)^k}/\overline{I(G)^{k+1}})=p.$$In fact, we prove a more general result in Theorem \ref{dtwoquo}. We show that for any monomial ideal $I\varsubsetneq S$, the sequence $\{{\rm depth}(\overline{I^k}/\overline{I^{k+1}})\}_{k=0}^{\infty}$ is convergent and$$\lim_{k\rightarrow\infty}{\rm depth}(\overline{I^k}/\overline{I^{k+1}})=n-\ell(I).$$

\begin{thm} \label{dtwoquo}
For any nonzero monomial ideal $I\varsubsetneq S$, the sequence $\{{\rm depth}(\overline{I^k}/\overline{I^{k+1}})\}_{k=0}^{\infty}$ is convergent and moreover,$$\lim_{k\rightarrow\infty}{\rm depth}(\overline{I^k}/\overline{I^{k+1}})=n-\ell(I).$$
\end{thm}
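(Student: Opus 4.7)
The plan is to combine Hoa--Trung's asymptotic formula $\lim_{k\to\infty}{\rm depth}(S/\overline{I^k})=n-\ell(I)$ from \cite[Lemma 1.5]{ht} with the short exact sequences
$$0\longrightarrow \overline{I^k}/\overline{I^{k+1}}\longrightarrow S/\overline{I^{k+1}}\longrightarrow S/\overline{I^k}\longrightarrow 0 \qquad (\ast)$$
and with the Noetherianness of the normalized Rees algebra $\overline{\mathcal{R}}:=\bigoplus_{k\geq 0}\overline{I^k}\,t^k$, which is a module-finite (hence Noetherian) extension of $\mathcal{R}(I)=S[It]$. Setting $d:=n-\ell(I)$, the lower bound ${\rm depth}(\overline{I^k}/\overline{I^{k+1}})\geq d$ for $k\gg 0$ is immediate from the depth lemma applied to $(\ast)$, using that both ${\rm depth}(S/\overline{I^k})$ and ${\rm depth}(S/\overline{I^{k+1}})$ equal $d$ for $k\gg 0$.

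For the matching upper bound and the existence of the limit, I would pass to a Veronese: since $\overline{\mathcal{R}}$ is a finitely generated $S$-algebra, pick $c\geq 1$ with $(\overline{I^c})^k=\overline{I^{ck}}$ for all $k\geq 1$, so that the $c$-th Veronese $\overline{\mathcal{R}}^{(c)}$ is standard graded. Then $J:=\overline{I^c}$ is a normal monomial ideal with $\ell(J)=\ell(I)$, and \cite[Theorem 1.2]{hh''} applied to $J$ gives
$$\lim_{k\to\infty}{\rm depth}\bigl(\overline{I^{ck}}/\overline{I^{c(k+1)}}\bigr)=\lim_{k\to\infty}{\rm depth}(J^k/J^{k+1})=n-\ell(J)=d.$$
The containment $\overline{I^a}\cdot\overline{I^{b+1}}\subseteq \overline{I^{a+b+1}}$ makes $\bigoplus_k \overline{I^{k+1}}t^k$ a graded ideal of $\overline{\mathcal{R}}$ whose quotient is $G:=\bigoplus_k \overline{I^k}/\overline{I^{k+1}}$, so $G$ is a finitely generated graded $\overline{\mathcal{R}}$-module; restricting to each residue class modulo $c$ exhibits these pieces as finitely generated graded $\overline{\mathcal{R}}^{(c)}$-modules, and a Kodiyalam-type theorem then makes ${\rm depth}(G_k)$ eventually periodic with period dividing $c$.

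To conclude, the iterated depth lemma on the filtration $\overline{I^{ck}}\supseteq\overline{I^{ck+1}}\supseteq\cdots\supseteq\overline{I^{c(k+1)}}$ combined with ${\rm depth}(G_{ck+j})\geq d$ gives
$${\rm depth}\bigl(\overline{I^{ck}}/\overline{I^{c(k+1)}}\bigr)\geq \min_{0\leq j<c}{\rm depth}(G_{ck+j})\geq d,$$
while the subsequential limit above pins this minimum to $d$ for $k\gg 0$; together with the periodicity of ${\rm depth}(G_k)$, this forces every residue class modulo $c$ to stabilize at depth exactly $d$, yielding the convergence of the sequence to $d=n-\ell(I)$.

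The main obstacle is this final step: upgrading ``minimum over a length-$c$ window equals $d$'' to ``every element of the window equals $d$''. Periodicity alone does not a priori rule out a residue class stabilizing strictly above $d$, so one may have to augment the argument with Takayama's formula \cite[Theorem 2.2]{t1} to control local cohomology of $\overline{I^k}/\overline{I^{k+1}}$ in each multidegree, or mimic the Herzog--Hibi proof with $I^k$ replaced by $\overline{I^k}$ throughout, thereby bypassing the Veronese reduction entirely.
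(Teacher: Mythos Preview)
You have correctly identified the gap: eventual periodicity of $k\mapsto{\rm depth}(G_k)$ together with $\min_{0\leq j<c}{\rm depth}(G_{ck+j})=d$ for $k\gg 0$ only guarantees that \emph{some} residue class stabilizes at $d$, not all of them, so the full sequence need not converge. Neither of your suggested patches (Takayama's formula, or redoing Herzog--Hibi for $\overline{I^k}$) is what the paper does.

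The paper dissolves the gap by obtaining genuine convergence (not just periodicity) at the outset, via a change of base ring. Instead of viewing $G=\bigoplus_k\overline{I^k}/\overline{I^{k+1}}$ as a module over $\overline{\mathcal{R}}$ or its Veronese (which yields only periodicity), view it as a graded module over $A={\rm gr}_I(S)=\bigoplus_k I^k/I^{k+1}$, which is already \emph{standard} graded over $S$. The action is well defined since $I^j\cdot\overline{I^k}\subseteq\overline{I^{j+k}}$, and $G$ is finitely generated over $A$ precisely because of the relation $\overline{I^k}=I^{k-s}\,\overline{I^s}$ for $k\geq s$ (\cite[Proposition~5.3.4]{hs}). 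Now \cite[Theorem~1.1]{hh''} applies directly and gives convergence of $\{{\rm depth}(G_k)\}$ to some value $L$. From this point on your argument goes through verbatim: the short exact sequence $(\ast)$ with Hoa--Trung gives $L\geq d$, while your filtration inequality ${\rm depth}\bigl(\overline{I^{ck}}/\overline{I^{c(k+1)}}\bigr)\geq\min_{0\leq j<c}{\rm depth}(G_{ck+j})$ now has right-hand side equal to $L$ (not merely $\geq d$) for $k\gg 0$, and combined with $\lim_k{\rm depth}(J^k/J^{k+1})=d$ this yields $L\leq d$. This is exactly the route taken in the paper.
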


\begin{proof}
Note that $A={\rm gr}_I(S)=\bigoplus_{k=0}^{\infty}I^k/I^{k+1}$ is a finitely generated standard graded $S$-algebra. By \cite[Proposition 5.3.4]{hs}, there exists an integer $s\geq 1$ such that for all $k\geq s$ we have $\overline{I^k}=I^{k-s}\overline{I^s}$. This shows that $E=\bigoplus_{k=0}^{\infty}\overline{I^k}/\overline{I^{k+1}}$ is a finitely generated graded $A$-module. Hence, \cite[Theorem 1.1]{hh''} implies that the sequence $\{{\rm depth}(\overline{I^k}/\overline{I^{k+1}})\}_{k=0}^{\infty}$ is convergent.

Let $k_0\geq 1$ be an integer with the property that for every $k\geq k_0$ we have$${\rm depth}(\overline{I^k}/\overline{I^{k+1}})=\lim_{k\rightarrow\infty}{\rm depth}(\overline{I^k}/\overline{I^{k+1}}).$$As mentioned above, there exists an integer $s\geq 1$ such that for all $k\geq s$ we have $\overline{I^k}=I^{k-s}\overline{I^s}$. In particular, for every integer $k\geq 1$, we have$$(\overline{I^s})^k\subseteq\overline{I^{ks}}=I^{(k-1)s}\overline{I^s}=(I^s)^{k-1}\overline{I^s}\subseteq \overline{(I^s)}^{k-1}\overline{I^s}=(\overline{I^s})^k.$$ Hence, $(\overline{I^s})^k=\overline{I^{ks}}$, for every integer $k\geq 1$. Let $k\geq k_0$ be an integer. For every integer $i$ with $ks\leq i\leq (k+1)s-2$, consider the following exact sequence.$$0\longrightarrow \overline{I^{i+1}}/\overline{I^{(k+1)s}}\longrightarrow \overline{I^i}/\overline{I^{(k+1)s}}\longrightarrow \overline{I^i}/\overline{I^{i+1}}\rightarrow 0$$Appliying the depth depth Lemma \cite[Proposition 1.2.9]{bh} on the above exact sequence, we obtain that$${\rm depth}(\overline{I^i}/\overline{I^{(k+1)s}})\geq \min\{{\rm depth}(\overline{I^{i+1}}/\overline{I^{(k+1)s}}), {\rm depth}(\overline{I^i}/\overline{I^{i+1}})\}.$$Using this inequality repeatedly, we conclude that
\begin{align*}
{\rm depth}(\overline{I^{ks}}/\overline{I^{(k+1)s}}) & \geq \min\big\{{\rm depth}(\overline{I^i}/\overline{I^{i+1}}) : i=ks, \ldots, (k+1)s-1\big\}\\ & =\lim_{k\rightarrow\infty}{\rm depth}(\overline{I^k}/\overline{I^{k+1}}),
\end{align*}
where the last equality follows from the choice of $k$. Thus, we have$$\lim_{k\rightarrow\infty}{\rm depth}(\overline{I^s}^k/\overline{I^s}^{k+1})=\lim_{k\rightarrow\infty}{\rm depth}(\overline{I^{ks}}/\overline{I^{(k+1)s}})\geq \lim_{k\rightarrow\infty}{\rm depth}(\overline{I^k}/\overline{I^{k+1}}).$$By \cite[Theorem 1.2]{hh''},$$\lim_{k\rightarrow\infty}{\rm depth}(\overline{I^s}^k/\overline{I^s}^{k+1})=n-\ell(\overline{I^s}),$$and since $\ell(\overline{I^s})=\ell(I^s)=\ell(I),$ we conclude that$$\lim_{k\rightarrow\infty}{\rm depth}(\overline{I^k}/\overline{I^{k+1}})\leq \lim_{k\rightarrow\infty}{\rm depth}(\overline{I^s}^k/\overline{I^s}^{k+1})=n-\ell(I).$$

We now prove that $\lim_{k\rightarrow\infty}{\rm depth}(\overline{I^k}/\overline{I^{k+1}})\geq n-\ell(I)$.

Consider the exact sequence$$0\longrightarrow \overline{I^k}/\overline{I^{k+1}}\longrightarrow S/\overline{I^{k+1}}\longrightarrow S/\overline{I^k}\rightarrow 0.$$By \cite[Lemma 1.5]{ht}, there exists an integer $k_1\geq 1$ such that for every $k\geq k_1$, we have ${\rm depth}(S/\overline{I^k})=n-\ell(I)$. Thus, applying the depth Lemma \cite[Proposition 1.2.9]{bh} on the above exact sequence, we conclude that ${\rm depth}(\overline{I^k}/\overline{I^{k+1}})\geq n-\ell(I)$, for every integer $k\ge k_1$. This completes the proof.
\end{proof}

Restricting to edge ideals, we obtain the following corollary.

\begin{cor} \label{gtwoquo}
For any graph $G$, the sequence $\{{\rm depth}(\overline{I(G)^k}/\overline{I(G)^{k+1}})\}_{k=0}^{\infty}$ is convergent and$$\lim_{k\rightarrow\infty}{\rm depth}(\overline{I(G)^k}/\overline{I(G)^{k+1}})=p,$$where $p$ is the number of bipartite connected components of $G$.
\end{cor}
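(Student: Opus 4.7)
The plan is to deduce Corollary \ref{gtwoquo} as a direct specialization of Theorem \ref{dtwoquo} combined with the combinatorial formula for the analytic spread of an edge ideal that was recorded in Section \ref{sec2}. Since Theorem \ref{dtwoquo} already handles both the convergence of the sequence $\{{\rm depth}(\overline{I^k}/\overline{I^{k+1}})\}$ and the identification of its limit with $n-\ell(I)$ for an arbitrary nonzero proper monomial ideal $I$, no new homological or cohomological input is needed; the work reduces to translating the invariant $n-\ell(I(G))$ into graph-theoretic terms.

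Concretely, I would proceed as follows. Assume first that $G$ has at least one edge, so that $I=I(G)$ is a nonzero proper monomial ideal of $S$, and apply Theorem \ref{dtwoquo} to $I$. This gives at once that the sequence $\{{\rm depth}(\overline{I(G)^k}/\overline{I(G)^{k+1}})\}_{k=0}^\infty$ converges and that
\[
\lim_{k\rightarrow\infty}{\rm depth}(\overline{I(G)^k}/\overline{I(G)^{k+1}}) = n-\ell(I(G)).
\]
Next, I invoke the well-known formula recalled at the end of Section \ref{sec2} (see \cite[Page 50]{v}), namely $\ell(I(G))=n-p$, where $p$ is the number of bipartite connected components of $G$. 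Substituting this into the above identity yields
\[
n-\ell(I(G)) = n-(n-p) = p,
\]
which is exactly the value claimed in the corollary.

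There is essentially no substantive obstacle, since both ingredients are already in place; the argument is a one-line specialization. The only point requiring a comment is the degenerate case in which $G$ has no edges, so that $I(G)=0$ and the hypothesis of Theorem \ref{dtwoquo} does not formally apply. In that case every quotient $\overline{I(G)^k}/\overline{I(G)^{k+1}}$ is the zero module and $p=n$, so the statement can either be dismissed by convention (${\rm depth}(0)=\infty$) or treated as vacuous. Either way, the substantive content of the corollary is covered by the specialization above.
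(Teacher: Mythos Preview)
Your proposal is correct and matches the paper's approach: the corollary is stated immediately after Theorem~\ref{dtwoquo} with no separate proof, as it is simply the specialization of that theorem to $I=I(G)$ together with the formula $\ell(I(G))=n-p$ recalled in Section~\ref{sec2}. Your handling of the edgeless case is a reasonable aside, but the paper does not address it.
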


As we saw in Section \ref{sec3}, Lemma \ref{collect} has a key role in the proof of Theorems \ref{quo}, \ref{ide} and \ref{twoquo}. As a particular case of this lemma, for every monomial ideal $I$, there exists an integer $s\geq 1$ with the property that$${\rm sdepth}(S/I^{sm})\leq {\rm sdepth}(S/\overline{I}).$$It is reasonable to ask whether this inequality is true, if one replaces sdepth by depth. In Theorem \ref{dnormal1}, we show this is the case. Our proof is base on a formula due to Takayama \cite{t1}, which is presented as follows.

Let $I$ be a monomial ideal. As $S/I$ is a $\mathbb{Z}^n$-graded $S$-module, it follows that for every integer $i$, the local cohomology module $H_{\mathfrak{m}}^i(S/I)$ is $\mathbb{Z}^n$-graded too. For any vector $\alpha\in \mathbb{Z}^n$, we denote the $\alpha$-component of $H_{\mathfrak{m}}^i(S/I)$ by $H_{\mathfrak{m}}^i(S/I)_{\alpha}$. The {\it co-support} of the vector $\alpha=(\alpha_1, \ldots, \alpha_n)$ is defined to be the set ${\rm CS}(\alpha)=\{i : \alpha_i < 0\}$. For any subset $F\subseteq [n]$, let $S_F=S[x_i^{-1}: i\in F]$. Suppose that $\Delta(I)$ is the simplicial complex over $[n]$ with Stanley-Reisner ideal $I_{\Delta(I)}=\sqrt{I}$. For any vector $\alpha=(\alpha_1, \ldots, \alpha_n)\in \mathbb{Z}^n$, we set$$\Delta_{\alpha}(I)=\{F\subseteq [n]\setminus {\rm CS}(\alpha) : \mathbf{x}^{\alpha}\notin IS_{F\cup {\rm CS}(\alpha)}\},$$where $\mathbf{x}^{\alpha}=x_1^{\alpha_1}\ldots x_n^{\alpha_n}$. Takayama \cite[Theorem 2.2]{t1} proves that for every vector $\alpha\in \mathbb{Z}^n$ and for every integer $i$, we have
\[
\begin{array}{rl}
\dim_{\mathbb{K}} H_{\mathfrak{m}}^i(S/I)_{\alpha}=\dim_{\mathbb{K}} \widetilde{H}_{i-\mid{\rm CS}(\alpha)\mid-1}(\Delta_{\alpha}(I); \mathbb{K}).
\end{array} \tag{4} \label{2}
\]
Using this formula, we are able to prove the following proposition.

\begin{prop} \label{dnormal}
Let $I$ be a monomial ideal in $S$. Assume that $s\geq 1$ is an integer with property the for every monomial $u\in \overline{I}$, we have $u^s\in I^s$. Then for every $m\geq 1$,
$${\rm depth} (S/I^{sm}) \leq {\rm depth} (S/\overline{I}).$$
\end{prop}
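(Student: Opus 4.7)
The plan is to translate the depth inequality into a comparison of local cohomology via Takayama's formula. Setting $d := {\rm depth}(S/\overline{I})$, I first select a vector $\alpha \in \mathbb{Z}^n$ with $H_{\mathfrak{m}}^d(S/\overline{I})_{\alpha} \neq 0$; equivalently, $\widetilde{H}_{d - |{\rm CS}(\alpha)| - 1}(\Delta_\alpha(\overline{I});\mathbb{K}) \neq 0$. My candidate witness for the target non-vanishing is $\beta := sm \cdot \alpha$, which satisfies ${\rm CS}(\beta) = {\rm CS}(\alpha)$ because $sm > 0$. The entire argument then reduces to establishing the simplicial identity
$$\Delta_{\alpha}(\overline{I}) \;=\; \Delta_{sm\alpha}(I^{sm}),$$
for a second application of Takayama's formula immediately gives $H_{\mathfrak{m}}^d(S/I^{sm})_{sm\alpha} \neq 0$, whence ${\rm depth}(S/I^{sm}) \leq d$.

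For the simplicial identity, I fix $F \subseteq [n] \setminus {\rm CS}(\alpha)$ and set $G := F \cup {\rm CS}(\alpha)$. I will use the basic fact that for any monomial ideal $J$, one has $\mathbf{x}^{\gamma} \in JS_G$ if and only if there exists a minimal monomial generator $\mathbf{x}^{\zeta}$ of $J$ with $\zeta_i \leq \gamma_i$ for every $i \notin G$. The inclusion $\Delta_{sm\alpha}(I^{sm}) \subseteq \Delta_{\alpha}(\overline{I})$ is the routine one and uses the hypothesis directly: given $\mathbf{x}^{\delta} \in \overline{I}$ witnessing $\mathbf{x}^{\alpha} \in \overline{I}S_G$, the assumption $(\mathbf{x}^{\delta})^s \in I^s$ yields $(\mathbf{x}^{\delta})^{sm} \in I^{sm}$, whose exponent vector dominates $sm\alpha$ on the coordinates outside $G$.

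The main obstacle is the reverse inclusion $\Delta_{\alpha}(\overline{I}) \subseteq \Delta_{sm\alpha}(I^{sm})$, because here we must reconstruct a generator of $\overline{I}$ from one of $I^{sm}$. My approach is polyhedral: if $w \in I^{sm}$ satisfies $\deg(w)_i \leq sm\alpha_i$ for $i \notin G$, then because $w$ is an $sm$-fold product of generators of $I$, the rational vector $\eta := \deg(w)/(sm)$ lies in the Newton polyhedron ${\rm NP}(I) = {\rm NP}(\overline{I})$ and satisfies $\eta_i \leq \alpha_i$ for $i \notin G$. I then define the lattice point $\delta$ by $\delta_i = \alpha_i$ for $i \notin G$ and $\delta_i = \lceil \eta_i \rceil$ for $i \in G$. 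Since $\delta \geq \eta$ componentwise and ${\rm NP}(I)$ is closed under adding elements of $\mathbb{R}^n_{\geq 0}$, we get $\delta \in {\rm NP}(I) \cap \mathbb{Z}^n_{\geq 0}$, so $\mathbf{x}^{\delta} \in \overline{I}$ furnishes the desired witness.

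The subtlety I expect to be the crux is precisely this rounding step: one cannot round $\eta$ down (that might leave ${\rm NP}(I)$), so rounding up on the $G$-coordinates while pinning the others at $\alpha_i$ is what simultaneously preserves membership in the Newton polyhedron and the domination $\delta_i \leq \alpha_i$ for $i \notin G$. The hypothesis of the proposition enters only in the easy inclusion, while the Newton-polyhedron geometry of $\overline{I}$ drives the harder one.
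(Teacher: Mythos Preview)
Your proof is correct and follows the same skeleton as the paper: Takayama's formula reduces the depth inequality to the simplicial identity $\Delta_{\alpha}(\overline{I}) = \Delta_{sm\alpha}(I^{sm})$, and one then applies Takayama again at the degree $sm\alpha$.

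The only difference is in how the simplicial identity is established, and here you over-engineer what the paper handles in one line. The paper simply observes that for any monomial $u\in S$ one has $u\in\overline{I}$ if and only if $u^{sm}\in I^{sm}$: the forward implication is the hypothesis, and the backward one is the very definition of the integral closure of a monomial ideal. This biconditional, applied after clearing denominators in $S_G$ (multiply $\mathbf{x}^{\alpha}$ by a large power of $\prod_{i\in G}x_i$), yields both inclusions at once. In particular, what you call the ``main obstacle''---the inclusion $\Delta_{\alpha}(\overline{I})\subseteq\Delta_{sm\alpha}(I^{sm})$---does not require any geometry of the Newton polyhedron: if $\mathbf{x}^{sm\alpha}\in I^{sm}S_G$, then for $N$ large and divisible by $sm$ the monomial $u=\mathbf{x}^{\alpha}\prod_{i\in G}x_i^{N/(sm)}$ satisfies $u^{sm}\in I^{sm}$, hence $u\in\overline{I}$, hence $\mathbf{x}^{\alpha}\in\overline{I}S_G$. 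Your rounding construction is a valid alternative proof of this direction, but it essentially re-derives the membership criterion $u^k\in I^k\Rightarrow u\in\overline{I}$ via the Newton polyhedron characterization rather than just citing it. So the ``crux'' you anticipated is not where the content lies; the hypothesis is genuinely the only nontrivial input, and it goes into the direction you labeled as routine.
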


\begin{proof}
Set $t={\rm depth} (S/\overline{I})$. It follows that there exists a vector $\alpha\in \mathbb{Z}^n$ such that $H_{\mathfrak{m}}^t(S/\overline{I})_{\alpha}\neq 0$. Thus, equality (\ref{2}), implies that$$\widetilde{H}_{t-\mid{\rm CS}(\alpha)\mid-1}(\Delta_{\alpha}(\overline{I}); \mathbb{K})\neq 0.$$ Now it follows from the assumption that for every integer $m\geq 1$ and every monomial $u\in S$, we have $u\in \overline{I}$ if and only if $u^{sm}\in I^{sm}$. We conclude that
\begin{align*}
& \Delta_{\alpha}(\overline{I})=\{F\subseteq [n]\setminus {\rm CS}(\alpha) : \mathbf{x}^{\alpha}\notin \overline{I}S_{F\cup {\rm CS}(\alpha)}\}\\ & =\{F\subseteq [n]\setminus {\rm CS}(\alpha) : \mathbf{x}^{sm\alpha}\notin I^{sm}S_{F\cup {\rm CS}(\alpha)}\}\\ & =\Delta_{sm\alpha}(I^{sm}),
\end{align*}
where the last equality follows from the fact that ${\rm CS}(sm\alpha)={\rm CS}(\alpha)$. Therefore,$$\widetilde{H}_{t-\mid{\rm CS}(sm\alpha)\mid-1}(\Delta_{sm\alpha}(I^{sm}); \mathbb{K})=\widetilde{H}_{t-\mid{\rm CS}(\alpha)\mid-1}(\Delta_{\alpha}(\overline{I}); \mathbb{K})\neq 0.$$By equality (\ref{2}), we deduce that $H_{\mathfrak{m}}^t(S/I^{sm})_{sm\alpha}\neq 0$ and hence, ${\rm depth} (S/I^{sm}) \leq t$.
\end{proof}

Our next goal is determine an integer $s$ which satisfies the assumption of Proposition \ref{dnormal}. The following lemma is the main step in this regard. The proof of this lemma is based on the determinantal trick which was suggested to us by Irena Swanson.

\begin{lem} \label{powe}
Let $I$ be a monomial ideal with analytic spread $\ell=\ell(I)$. Assume that $u\in \overline{I}$ is a monomial. Then there exists an integer $t\leq \mu(\overline{I^{\ell-1}})$ with $u^t\in I^t$.
\end{lem}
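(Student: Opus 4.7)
The plan is to apply the classical determinantal (Cayley--Hamilton) trick to the ideal $J:=\overline{I^{\ell-1}}$, viewed as a faithful $S$-module, and then extract the conclusion by comparing monomial supports in the resulting integrality equation. Let $v_{1},\ldots,v_{t_{0}}$ denote the minimal monomial generators of $J$, where $t_{0}=\mu(\overline{I^{\ell-1}})$.

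The first (and most delicate) step is to establish the containment $u\cdot J\subseteq I\cdot J$. Since $u\in\overline{I}$, one already has $uJ\subseteq\overline{I}\cdot\overline{I^{\ell-1}}\subseteq\overline{I^{\ell}}$, so the task reduces to proving $\overline{I^{\ell}}\subseteq I\cdot\overline{I^{\ell-1}}$. For a monomial ideal this is a combinatorial statement about the Newton polyhedron of $I$: given a monomial $x^{\beta}\in\overline{I^{\ell}}$ (so $\beta/\ell$ lies in the Newton polyhedron of $I$), one writes $\beta/\ell=\sum\lambda_{i}\delta_{i}+\gamma$ with $\sum\lambda_{i}=1$, $\lambda_{i},\gamma\geq 0$, and $\delta_{i}$ exponents of generators of $I$. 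The analytic spread of $I$ controls the support of the $\lambda_{i}$ finely enough that one can arrange some $\ell\lambda_{i_{0}}\geq 1$; then $\beta-\delta_{i_{0}}$ is a nonnegative integer vector whose scaling by $1/(\ell-1)$ remains in the Newton polyhedron of $I$, giving $x^{\beta}=x^{\delta_{i_{0}}}\cdot x^{\beta-\delta_{i_{0}}}\in I\cdot\overline{I^{\ell-1}}$.

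With $uJ\subseteq IJ$ in hand, for each $j$ write $uv_{j}=\sum_{i=1}^{t_{0}}a_{ji}v_{i}$ with $a_{ji}\in I$, set $A=(a_{ji})$, and let $\vec{v}=(v_{1},\ldots,v_{t_{0}})^{T}$. The relation $A\vec{v}=u\vec{v}$ combined with Cayley--Hamilton yields $\chi_{A}(u)\vec{v}=0$, where $\chi_{A}$ is the characteristic polynomial of $A$; since $S$ is a domain and each $v_{j}\neq 0$, this forces
\[
u^{t_{0}}+c_{1}u^{t_{0}-1}+\cdots+c_{t_{0}}=0
\]
in $S$, with each $c_{i}\in I^{i}$ (being, up to sign, the sum of $i\times i$ principal minors of $A$). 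To convert this into the honest statement $u^{t}\in I^{t}$, I would expand each $c_{i}$ as a $\mathbb{K}$-linear combination of distinct monomials in $I^{i}$ and exploit that distinct monomials in $S$ are $\mathbb{K}$-linearly independent. Since the monomial $u^{t_{0}}$ appears on the left with coefficient $1$, it must be cancelled on the right by some term $\alpha m u^{t_{0}-i}$ arising from a monomial $m$ of $c_{i}$ with $mu^{t_{0}-i}=u^{t_{0}}$, and this identity of monomials forces $m=u^{i}$. As $m\in I^{i}$, we obtain $u^{i}\in I^{i}$ for some $i\in\{1,\ldots,t_{0}\}$, and the lemma follows with $t:=i\leq t_{0}=\mu(\overline{I^{\ell-1}})$.

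The main obstacle is the first step: the inclusion $\overline{I^{\ell}}\subseteq I\cdot\overline{I^{\ell-1}}$ is precisely where the analytic spread $\ell(I)$ pins down the specific bound $\mu(\overline{I^{\ell-1}})$ (as opposed to $\mu(\overline{I^{k}})$ for other $k$), and it requires a nontrivial combinatorial argument on the Newton polyhedron of $I$ (or, equivalently, a generation-degree bound on the normalized Rees algebra). The subsequent determinantal trick and the monomial-support extraction in the third paragraph are essentially mechanical once that key inclusion is available.
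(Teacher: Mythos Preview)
Your proof is correct and follows essentially the same route as the paper: the determinantal/Cayley--Hamilton trick applied to the module $\overline{I^{\ell-1}}$, followed by the monomial-support extraction to turn the integrality equation into an honest $u^{t}\in I^{t}$. The only substantive difference lies in how the key inclusion $\overline{I^{\ell}}\subseteq I\cdot\overline{I^{\ell-1}}$ is obtained. The paper dispatches this in one line by invoking Singla's theorem \cite[Theorem~5.1]{s'}, which produces a monomial ideal $J\subseteq I$ with $\overline{I^{\ell}}=J\overline{I^{\ell-1}}$. You instead sketch a direct Carath\'eodory-type argument on the Newton polyhedron: write $\beta/\ell$ as a convex combination with at most $\ell$ nonzero coefficients, use pigeonhole to find $\ell\lambda_{i_{0}}\geq 1$, and peel off $x^{\delta_{i_{0}}}$. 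This is the right intuition, but the assertion that ``the analytic spread controls the support of the $\lambda_{i}$ finely enough'' to bound the number of nonzero $\lambda_{i}$ by $\ell$ is not immediate from Carath\'eodory alone when $I$ is not equigenerated; making it precise is essentially the content of Singla's result (a reduction-number bound for monomial ideals via the fiber polytope). So your argument is not wrong, but the step you yourself flag as ``the main obstacle'' is exactly where the paper outsources the work to the literature rather than reproving it.
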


\begin{proof}
Assume that $G(\overline{I^{\ell-1}})=\{u_1, \ldots, u_m\}$ is the set of minimal monomial generators of $\overline{I^{\ell-1}}$. By \cite[Theorem 5.1]{s'}, there exists a monomial ideal $J\subseteq I$, such that $\overline{I^{\ell}}=J\overline{I^{\ell-1}}$. In particular, $\overline{I^{\ell}}\subseteq I\overline{I^{\ell-1}}$. Since $u\in \overline{I}$, we conclude that $u\overline{I^{\ell-1}}\subseteq \overline{I^{\ell}}\subseteq I\overline{I^{\ell-1}}$. As a consequence, for every integer $i$ with $1\leq i \leq m$, we have $uu_i\in I\overline{I^{\ell-1}}$. Thus, we may write $uu_i=\sum_{j=1}^mc_{ij}u_j$, for some polynomial $c_{ij}\in I$. Let $A=(a_{ij})_{m\times m}$ be the matrix with $a_{ij}=\delta_{ij}u-c_{ij}$, where $\delta_{ij}$ denotes the Kronecker delta function. Set $\mathbf{u}=(u_1, \ldots, u_m)^T$. Then we have $A\mathbf{u}=0$. Multiplying by the adjoint of $A$, we conclude that $\det(A)u_i=0$, for every integer $i$ with $1\leq i\leq m$. This means that $\det(A)=0$.

Obviously, $\det(A)$ is a polynomial of the form$$\det(A)=u^m+f_1u^{m-1}+ \cdots + f_{m-1}u+f_m,$$where for every integer $i$ with $1\leq i\leq m$, we have $f_i\in I^i$. In particular, we obtain that
\[
\begin{array}{rl}
u^m+f_1u^{m-1}+ \cdots + f_{m-1}u+f_m=0.
\end{array} \tag{5} \label{pow}
\]
For every integer $i$ with $1\leq i\leq m$, let $\alpha_i \in \mathbb{K}$ be the coefficient of $u^i$ in the polynomial $f_i$. Thus, equality (\ref{pow}) implies that$$u^m+\alpha_1u^m+ \cdots + \alpha_{m-1}u^m+\alpha_mu^m=0.$$Hence, that there exists and integer $t$ with $1\leq t\leq m$ such that $\alpha_t\neq 0$. This means that $u^t$ is one of the monomials appearing in the expansion of $f_t$. Since $f_t\in I^t$ and $I^t$ is a monomial ideal, we conclude that $u^t\in I^t$. We also notice that $t\leq m=\mu(\overline{I^{\ell-1}})$, which completes the proof.
\end{proof}

As a consequence of Proposition \ref{dnormal} and Lemma \ref{powe}, we obtain our next main result.

\begin{thm} \label{dnormal1}
Let $I\subset S$ be a monomial ideal with analytic spread $\ell=\ell(I)$. Set $s=\mu(\overline{I^{\ell-1}})!$. Then for every integer $m\geq 1$, we have
$${\rm depth} (S/I^{sm}) \leq {\rm depth} (S/\overline{I}).$$
\end{thm}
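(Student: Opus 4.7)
The plan is to derive Theorem \ref{dnormal1} as an immediate combination of Proposition \ref{dnormal} and Lemma \ref{powe}. Proposition \ref{dnormal} reduces the problem to producing a single integer $s\geq 1$ with the uniform property that $u^{s}\in I^{s}$ for \emph{every} monomial $u\in \overline{I}$. Lemma \ref{powe}, on the other hand, supplies such an exponent only on a per-monomial basis: for each monomial $u\in\overline{I}$ there is a positive integer $t_u$ with $t_u\leq N:=\mu(\overline{I^{\ell-1}})$ and $u^{t_u}\in I^{t_u}$.

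The key step is therefore to pass from these per-monomial exponents to a single uniform one. Since the bound $N$ is independent of the monomial $u$, every $t_u$ lies in the finite set $\{1,2,\dots,N\}$, so the choice $s=N!$ is a common multiple of all possible values of $t_u$. For any monomial $u\in\overline{I}$, writing $s=k_u t_u$ with $k_u\in\mathbb{Z}_{>0}$, Lemma \ref{powe} and the fact that $I^{t_u}$ is an ideal give
\[
u^{s}=(u^{t_u})^{k_u}\in (I^{t_u})^{k_u}=I^{k_u t_u}=I^{s}.
\]
Thus $s=\mu(\overline{I^{\ell-1}})!$ satisfies the hypothesis of Proposition \ref{dnormal}, and the conclusion ${\rm depth}(S/I^{sm})\leq {\rm depth}(S/\overline{I})$ for every $m\geq 1$ follows at once.

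I do not expect any real obstacle here: the nontrivial content has been isolated into Lemma \ref{powe} (which uses the determinantal trick and the reduction $\overline{I^{\ell}}\subseteq I\,\overline{I^{\ell-1}}$) and Proposition \ref{dnormal} (which is the Takayama-formula argument). The only subtle point worth flagging is that $\overline{I}$ typically contains infinitely many monomials, so one cannot simply form the least common multiple of the $t_u$'s; this is precisely why the uniform bound $t_u\leq\mu(\overline{I^{\ell-1}})$ from Lemma \ref{powe} is crucial and why taking the factorial of this bound yields a single $s$ that works simultaneously for all monomials in $\overline{I}$.
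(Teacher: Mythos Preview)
Your proposal is correct and follows essentially the same route as the paper: apply Lemma~\ref{powe} to get exponents bounded by $\mu(\overline{I^{\ell-1}})$, take the factorial as a common multiple so that $u^{s}\in I^{s}$ for all monomials $u\in\overline{I}$, and then invoke Proposition~\ref{dnormal}. The paper phrases the first step in terms of the finitely many minimal generators of $\overline{I}$ rather than all monomials, but this is a cosmetic difference.
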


\begin{proof}
Let $G(\overline{I})=\{u_1, \ldots, u_r\}$ be the set of minimal monomial generators of $\overline{I}$. By Lemma \ref{powe}, for every $1\leq i \leq r$, there exists integer $k_i\leq \mu(\overline{I^{\ell-1}})$, such that $u_i^{k_i}\in I^{k_i}$. As $k_i$ divides $s$ for every $i$, we conclude that $u^s\in I^s$, for every monomial $u\in \overline{I}$. The assertion now follows from Proposition \ref{dnormal}.
\end{proof}

Assume that $I$ is a squarefree monomial ideal. We know from the proof of \cite[Theorem 2.6]{htt} that$${\rm depth}(S/I^m)\leq {\rm depth}(S/I),$$for every integer $m\geq 1$. In the following theorem, we show that the above inequality is true for any integrally closed monomial ideal.

\begin{thm} \label{dnormal2}
Let $I$ be an integrally closed monomial ideal in $S$. Then for every integer $m\geq 1$, we have
$${\rm depth} (S/I^m) \leq {\rm depth} (S/I).$$
\end{thm}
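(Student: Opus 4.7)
The plan is to derive this as an essentially immediate consequence of Proposition \ref{dnormal}, by observing that for an integrally closed ideal the hypothesis of that proposition holds trivially with $s=1$.

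More precisely, recall that Proposition \ref{dnormal} asserts: if $I$ is a monomial ideal and $s\geq 1$ is an integer such that every monomial $u\in\overline{I}$ satisfies $u^s\in I^s$, then ${\rm depth}(S/I^{sm})\leq {\rm depth}(S/\overline{I})$ for every $m\geq 1$. Since we now assume $I=\overline{I}$, every monomial $u\in\overline{I}$ is already an element of $I$, so $u^1=u\in I=I^1$. Thus the hypothesis of Proposition \ref{dnormal} is verified with $s=1$.

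Feeding $s=1$ into the conclusion of Proposition \ref{dnormal} immediately yields
\[
{\rm depth}(S/I^m)={\rm depth}(S/I^{1\cdot m})\leq {\rm depth}(S/\overline{I})={\rm depth}(S/I)
\]
for every $m\geq 1$, which is the desired inequality. Since the real work (the Takayama-formula computation identifying the simplicial complexes $\Delta_{\alpha}(\overline{I})$ and $\Delta_{sm\alpha}(I^{sm})$) has already been carried out in the proof of Proposition \ref{dnormal}, there is no serious obstacle here; the only point to verify is that the integral closure hypothesis collapses the power $s$ in Lemma \ref{powe} down to $1$, and this is built into the definition of an integrally closed ideal.
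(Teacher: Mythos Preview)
Your proof is correct and matches the paper's own argument essentially verbatim: the paper also observes that $\overline{I}=I$ forces $u\in I$ for every monomial $u\in\overline{I}$, so Proposition~\ref{dnormal} applies with $s=1$.
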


\begin{proof}
As $\overline{I}=I$, for every $u\in \overline{I}$ we have $u\in I$ and hence, Proposition \ref{dnormal} implies the assertion.
\end{proof}

Let $P = (x_{i_1}, \ldots, x_{i_r})$ be a monomial prime ideal in $S$, and $I\subseteq S$ any monomial ideal. Set $L=[n]\setminus\{x_{i_1}, \ldots, x_{i_r}\}$. We denote by $I(P)$ the monomial ideal in the polynomial ring $S(P) = \mathbb{K}[x_{i_1}, \ldots, x_{i_r}]$, which is obtained from $I$ by applying the $\mathbb{K}$-algebra homomorphism $S\rightarrow S(P)$ defined by $x_i\mapsto 1$ for all $i\in L$ and $x_i\mapsto x_i$, otherwise. It is known that (\cite[Lemma 1.3]{hrv})
$${\rm Ass}(S(P)/I(P))=\{Q\in {\rm Ass}(S/I): x_i\notin Q {\rm \ for \ all \ } i\in L\}.$$
Using this fact, we deduce the following result concerning the associated primes of powers of integrally closed monomial ideals.

\begin{cor} \label{ass}
Let $I$ be an integrally closed monomial ideal in $S$. Then for every integer $m\geq 1$, we have
$${\rm Ass}(S/I)\subseteq {\rm Ass}(S/I^m).$$
\end{cor}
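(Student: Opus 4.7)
The plan is to localize, via the substitution $S\to S(P)$ of the paragraph preceding the corollary, so that the prime $P$ becomes the graded maximal ideal of $S(P)$, then apply Theorem~\ref{dnormal2} in $S(P)$, and translate back. Let $P\in{\rm Ass}(S/I)$; since $I$ is a monomial ideal, $P$ is a monomial prime, say $P=(x_{i_1},\ldots,x_{i_r})$ with $L=[n]\setminus\{i_1,\ldots,i_r\}$, so $x_i\notin P$ for every $i\in L$. The formula from \cite{hrv} quoted just before the corollary then gives $PS(P)\in{\rm Ass}(S(P)/I(P))$, which is equivalent to ${\rm depth}(S(P)/I(P))=0$ because $PS(P)$ is the graded maximal ideal of the polynomial ring $S(P)$.

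To apply Theorem~\ref{dnormal2} in $S(P)$, I need the intermediate claim that $I(P)$ is itself integrally closed. I would prove this directly: given a monomial $u=\mathbf{x}^\eta\in S(P)$ with $u^k\in I(P)^k=(I^k)(P)$, there exists a monomial $\mathbf{x}^\beta\in I^k$ whose image under the substitution divides $u^k$, meaning $\pi(\beta)\leq k\eta$ componentwise, where $\pi\colon\mathbb{Z}^n\to\mathbb{Z}^r$ is the projection onto the $P$-coordinates. Lifting $\eta$ to the exponent $\gamma\in\mathbb{Z}^n_{\geq 0}$ whose $P$-coordinates equal $\eta$ and whose $L$-coordinates equal $\lceil\beta_L/k\rceil$ produces a monomial $v=\mathbf{x}^\gamma\in S$ whose $k$-th power is divisible by $\mathbf{x}^\beta$, hence $v^k\in I^k$. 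Since $I=\overline{I}$, this forces $v\in I$, and applying the substitution recovers $u=v(P)\in I(P)$; thus $I(P)=\overline{I(P)}$.

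With the claim established, Theorem~\ref{dnormal2} yields ${\rm depth}(S(P)/I(P)^m)\leq{\rm depth}(S(P)/I(P))=0$ for every $m\geq 1$, so $PS(P)\in{\rm Ass}(S(P)/I(P)^m)$. Because the substitution is a ring homomorphism, $I(P)^m=(I^m)(P)$, and applying the same formula from \cite{hrv} (now to $I^m$) in the reverse direction gives $P\in{\rm Ass}(S/I^m)$, as required. The main (and essentially only) obstacle I anticipate is verifying the claim that $I(P)$ remains integrally closed under the substitution; once this short Newton-polyhedron style lifting argument is in place, the rest is a routine two-line sandwich around Theorem~\ref{dnormal2}.
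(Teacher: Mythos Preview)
Your proposal is correct and follows essentially the same approach as the paper's proof: localize via $S\to S(P)$, use \cite[Lemma~1.3]{hrv} to translate $P\in{\rm Ass}(S/I)$ into ${\rm depth}(S(P)/I(P))=0$, apply Theorem~\ref{dnormal2} in $S(P)$, and translate back. The only difference is that where the paper cites \cite[Lemma~3.1]{s1} for the fact that $I(P)$ is integrally closed, you supply a direct lifting argument---which is correct and is presumably close to what that cited lemma does anyway.
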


\begin{proof}
Let $P\in {\rm Ass}(S/I)$ be a monomial prime ideal of $S$. Then by \cite[Lemma 1.3]{hrv}, we have $P\in {\rm Ass}(S(P)/I(P))$ and hence, ${\rm depth}_{S(P)}(S(P)/I(P))=0$. It follows from \cite[Lemma 3.1]{s1} that $I(P)$ is an integrally closed ideal and thus, Theorem \ref{dnormal2} shows that ${\rm depth}_{S(P)}(S(P)/I(P)^m)=0$, for every integer $m\geq 1$. Therefore,$$P\in {\rm Ass}(S(P)/I(P)^m)={\rm Ass}(S(P)/I^m(P)).$$Again \cite[Lemma 1.3]{hrv} implies that $P\in {\rm Ass}(S/I^m)$.
\end{proof}


\section*{Acknowledgment}

The author thanks Irena Swanson for suggesting determinantal trick in the proof of Lemma \ref{powe}.



\end{document}